\newcommand{\I}{\mathcal{I}}
\newcommand{\R}{\mathbb{R}}
\numberwithin{equation}{section}
\theoremstyle{plain}
\newtheorem{theorem}{Theorem}[section]
\theoremstyle{plain}
\theoremstyle{plain}
\newtheorem{lemma}[theorem]{Lemma}
\theoremstyle{plain}
\theoremstyle{plain}
\newtheorem{proposition}[theorem]{Proposition}
\theoremstyle{remark}
\title[]{Nonlocal degenerate Isaacs operators: H\"older regularity}
\date{}
\author{ Isabeau Birindelli, Giulio Galise and Yannick Sire}
\address{Dipartimento di Matematica Guido Castelnuovo, Sapienza
Universit\`a di Roma, Piazzale Aldo Moro 5, 00185, Roma, Italy.}
\email[I. Birindelli]{isabeau@mat.uniroma1.it}
\email[G. Galise]{galise@mat.uniroma1.it}
\address{Johns Hopkins University, Krieger Hall, 3400 N. Charles Street, Baltimore, MD, 21231, USA}
\email{ysire1@jhu.edu}
\begin{document}

 \begin{abstract} We show that bounded viscosity solutions of some nonlocal degenerate Isaacs type operators of order $2s$ are H\"older continuous, provided $s$ is sufficiently close to 1. As an application we obtain a Liouville theorem.\end{abstract}

\maketitle

\section{Introduction and main result}
The last decades have seen a huge interest in nonlocal operators and the properties of solutions of nonlocal equations. Particularly about fully nonlinear equations with a natural ellipticity condition, Caffarelli and Silvestre investigated the regularity of solutions to the Dirichlet problem, developing suitable  ABP estimates, Krylov-Safonov theorem and Evans-Krylov one. These seminal works generated a tremendous amount of literature since these operators appear naturally in many physical models (see e.g. \cite{CS1,CS3,CS2}). 

On the other hand, in the recent years there has been several works dedicated to some degenerate second-order investigated thoroughly by the first two authors with Ishii and Leoni (see e.g \cite{BGI1,BGI2,BGI3,BGL}). These operators have also a geometric meaning  and a optimal control interpretation (see e.g. \cite{HL}-\cite{BR}). These operators are very degenerate in a strong sense and the regularity properties of their viscosity solutions is understood only in some cases.

In this paper we consider a nonlocal equation associated to {\sl pure} L\'evy processes but very degenerate in a similar way as the second-order case is. In the probability community, the case of {\sl linear} degenerate operators has been considered for example by Bass and Chen \cite{bassChen} where they prove that a solution does not support  any Harnack inequality but still is H\"older continuous. In this case the singularity/degeneracy is played by some singular L\'evy measure. Typically, one can consider the sum $\sum_{k} (-\partial^2_{k})^{s}$ for $s \in (0,1)$ where the L\'evy measure is the sum of Dirac masses supported on each axes. Such linear equations have been investigated in e.g. \cite{ROS1, MR3436398,DROSV}. 

In the present contribution, although the operator considered is defined through 
one dimensional fractional Laplacians computed along directions of $\R^N$, it catches two extremal directions by means of the structure \lq\lq$\inf+\sup$\rq\rq. Hence it is a fully nonlinear operator and its local counterpart
is degenerate elliptic. For operators that do not include both the minimal
and maximal direction, the issue of H\"older regularity of solutions is in general open,
both in the local and nonlocal setting, and only partial results are known (see e.g. \cite{BGI1,BGS,FG,FV,V}). 
Here, however,  since the operator sees extremal directions, at least for an exponent close enough to second order, one recovers H\"older regularity. 

\smallskip 

More concretely, given a direction $\xi\in\mathbb S^{N-1}$ and  $u:\R^N\mapsto\R$, a bounded smooth function, let $\I_\xi u$ be the integral operator 
\begin{equation}\label{eq1}
\I_\xi u(x)=C_s\int\limits_0^{+\infty}\frac{u(x+\tau\xi)+u(x-\tau\xi)-2u(x)}{\tau^{1+2s}}\,d\tau,
\end{equation}
where $s\in(0,1)$ and $C_s$ is a normalizing constant
such that 
$$
\lim_{s\to1^-}\I_\xi u(x)=\left\langle D^2u(x)\xi,\xi\right\rangle.
$$ 
 Our goal here is to consider a fully nonlinear, {\sl strongly degenerate}, Isaacs type operator, given by 
\begin{equation}\label{eq2}
\I u=\inf_{\xi\in\mathbb S^{N-1}}\I_\xi u+\sup_{\xi\in\mathbb S^{N-1}}\I_\xi u .
\end{equation}
The operator $\I u$ is a possible nonlocal counterpart of the sum $$\lambda_1(D^2u)+\lambda_N(D^2u),$$ 
corresponding to the smallest and the largest eigenvalue of the Hessian of $u$.\\
Other nonlinear integral operators, named  fractional truncated Laplacians, have been introduced  in \cite{BGT}. There the authors, starting from the Courant-Fischer  characterization of the eigenvalues, analyzed different nonlocal versions of partial sums of eigenvalues of the Hessian matrix.


Our aim here is to study the H\"older regularity of viscosity solutions of the equation
\begin{equation}\label{eq3}
\I u=f(x)\quad\text{in $\Omega$},
\end{equation}
where $f\in C(\Omega)$ and $\Omega$ is a domain of $\R^N$ (not necessarily bounded).

\smallskip


Linear equations associated with L\'evy processes have been considered by Ros-Oton and Serra in \cite{ROS1}. The problem we consider here, which is very degenerate and nonlinear, does not fall into the class investigated in \cite{ROS1} or the fully nonlinear nonlocal equations with rough kernels considered in \cite{serra}. The authors of \cite{ROS1,serra} also consider some degenerate processes, where the degeneracy stems from the singularity of the L\'evy spectral measure, i.e the L\'evy measure is not necessarily supported on a set of positive measure; despite the singularity, they still  recover (at least) H\"older regularity for {\sl all} $s\in (0,1)$, i.e. the whole range of possible orders. 

Surprisingly enough, in our case where the L\'evy process sees only few, but {\sl extremal}, directions on the sphere, one gets H\"older regularity provided the order of the equation is close enough to $2$. This is the object of the our main theorem. 
\begin{theorem}\label{th1}
There exists $s_0\in(\frac12,1)$ such that if $s\in(s_0,1)$ and  $u$ is a bounded viscosity solution of \eqref{eq3}, then $u\in C_{\text{loc}}^{0,\alpha}(\Omega)$ for some $\alpha=\alpha(s)\in(0,1)$.\\
Moreover, given $\omega\subset\subset\omega'\subset\subset\Omega$,  one has
\begin{equation}\label{HolderEstimate}
\left\|u\right\|_{C^{0,\alpha}(\omega)}\leq C
\end{equation}
where $C=C(s,\text{dist}(\omega,\partial\omega'),\left\|u\right\|_{L^\infty(\R^N)},\left\|f\right\|_{L^\infty(\omega')})$ is a positive constant.
\end{theorem}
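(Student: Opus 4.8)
The plan is to prove an oscillation-decay estimate at a fixed scale and then iterate. Fix a point, say $0 \in \omega$, and work in balls $B_r \subset\subset \omega'$. After normalizing $u$ (subtracting a constant, dividing by $\|u\|_{L^\infty(\R^N)} + \|f\|_{L^\infty}$ up to harmless constants), I would try to show: there exist $s_0 \in (\tfrac12,1)$, and for $s \in (s_0,1)$ constants $\mu \in (0,1)$ and $\rho \in (0,1)$ (all independent of the particular solution) such that $\operatorname{osc}_{B_{\rho r}} u \le \mu \,\operatorname{osc}_{B_r} u$, \emph{provided} one also controls the growth of $u$ at infinity — so really one iterates on a family of functions $u_k$ living on rescaled domains while keeping a uniform bound $|u_k| \le 1 + (\text{something summable})$, the standard device in the Caffarelli--Silvestre nonlocal regularity scheme. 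Standard arguments then upgrade the discrete decay to $u \in C^{0,\alpha}_{\mathrm{loc}}$ with $\alpha = \alpha(s)$ determined by $\rho^\alpha = \mu$, together with the quantitative bound \eqref{HolderEstimate}.

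The core of the matter is the single-scale oscillation decay, and this is where the degeneracy of $\I$ must be defeated using the fact that $\I$ sees \emph{both} extremal directions. The idea: suppose $u$ (normalized so that $|u|\le 1$) satisfies $\I u = f$ in $B_1$ and, say, $u \le 0$ on a set of sizeable measure, or $|\{u \le 0\} \cap B_{1/2}| \ge \tfrac12 |B_{1/2}|$ — and suppose toward a contradiction that $u$ fails to drop below $1 - \theta$ somewhere in $B_{1/4}$. The presence of $\sup_\xi \I_\xi u$ lets me test against the directional operator in the direction $\xi$ realizing (nearly) the supremum: if $u$ is close to $1$ at $x$, then for the concavity-type term, $\I_\xi u(x)$ picks up a large negative contribution from the directional slices passing through the region where $u \le 0$. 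Conversely the $\inf_\xi$ term is bounded above by $\I_\xi u$ for \emph{any} convenient $\xi$. Quantitatively, because $s$ is close to $1$, the kernel $\tau^{-1-2s}$ is still integrable near $\infty$ but concentrates mass near $\tau = 0$; the point is that the one-dimensional integrals $\I_\xi u(x)$, for a direction hitting a set of positive $(N)$-dimensional measure where $u$ is small, can be made to produce a definite sign — this is exactly the Bass--Chen phenomenon that a \emph{single} well-chosen direction already forces Hölder (not Harnack) behaviour, here made robust by the $\inf+\sup$ structure. I would run this as a barrier argument: build an explicit subsolution/supersolution of $\I$ in an annulus (a radial-type function or a suitable polynomial-like profile $x \mapsto \min\{ \text{const}, c|x|^{-\beta}\}$ truncated) that is negative on $\partial B_1$, positive and bounded at the center, and for which $\I(\text{barrier}) \le -1$; comparison (viscosity comparison principle for $\I$, which I take for granted from the cited local/nonlocal theory or prove by doubling of variables) then pushes $u$ down.

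The role of $s$ near $1$ enters in constructing this barrier: for the local operator $\lambda_1(D^2 u) + \lambda_N(D^2 u)$ one has explicit radial subsolutions, and the claim is that these, or small perturbations, remain strict subsolutions of $\I$ once the nonlocal error $\I_\xi w - \langle D^2 w\,\xi,\xi\rangle$ is small uniformly on the relevant compact region — which holds for $s$ close to $1$ by the normalization $\lim_{s\to1^-}\I_\xi u = \langle D^2 u\,\xi,\xi\rangle$, quantified with an explicit modulus on $C^2$ test functions, together with a tail estimate using $\|u\|_{L^\infty(\R^N)}$ to control the far part of each one-dimensional integral. I expect the \textbf{main obstacle} to be precisely this quantitative nonlocal-to-local comparison: one must estimate $\I_\xi w(x)$ for the barrier $w$ uniformly in $x$ near the "bad" region and uniformly in the direction $\xi$, splitting $\int_0^\infty = \int_0^\delta + \int_\delta^\infty$, bounding the inner piece by the $C^2$-norm of $w$ times $\delta^{1-2s}/(1-s)$-type quantities and the outer piece by $\|w\|_{L^\infty} \delta^{-2s}/s$, and then choosing $\delta$ and the closeness $s_0 \to 1$ so that the nonlocal correction does not destroy the strict negativity $\I w \le -1$. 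A secondary technical point is handling the $\inf$ and $\sup$ over $\mathbb S^{N-1}$ in the viscosity sense (the operator is only continuous, not smooth, in the Hessian-like argument), which is dealt with by standard stability of viscosity solutions under $\inf/\sup$ and by noting that at a contact point one may select a maximizing/minimizing direction.
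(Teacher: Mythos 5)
Your overall scheme (single-scale oscillation decay via a measure/growth lemma, then iteration à la Caffarelli--Silvestre) contains a genuine gap at its core step, and it is not the route the paper takes. The growth lemma you want --- ``$|\{u\le 0\}\cap B_{1/2}|\ge \tfrac12|B_{1/2}|$ forces $u\le 1-\theta$ on $B_{1/4}$'' --- requires the operator to see sets of positive $N$-dimensional measure from every point, i.e.\ some ABP/weak-Harnack-type estimate for an associated minimal operator. For $\I u=\inf_\xi \I_\xi u+\sup_\xi\I_\xi u$ the Lévy measure at each point charges only two (solution- and point-dependent) directions, and a one-dimensional ray through a given point can entirely miss a set of positive measure; the $\inf+\sup$ structure does not average over the sphere, so your claim that the extremal direction ``picks up a large negative contribution from the directional slices passing through the region where $u\le0$'' has no quantitative basis. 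This is not a technicality: the Harnack inequality genuinely fails here (Bass--Chen), and Section 5 of the paper exhibits nontrivial bounded supersolutions of $\I u=0$ in all of $\R^N$, which rules out the supersolution measure estimates on which the growth-lemma machinery rests. Invoking the ``Bass--Chen phenomenon'' does not repair this, since their argument is probabilistic, uses iterated jumps along \emph{fixed} coordinate directions, and does not transfer to adversarially chosen extremal directions. Your second ingredient --- transferring local barriers for $\lambda_1(D^2u)+\lambda_N(D^2u)$ by a quantitative $\I_\xi w\to\langle D^2w\,\xi,\xi\rangle$ estimate --- also does not rescue the plan, because even in the local case the known Hölder/Lipschitz proofs for this degenerate Isaacs operator are of Ishii--Lions type, not Krylov--Safonov type, so there is no measure-based local scheme to perturb.

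For comparison, the paper's proof is a direct Ishii--Lions doubling argument: one maximizes $\phi(x,y)=u(x)-u(y)-M|x-y|^\alpha-L|x-z_0|^2$, tests at the maximum point $(\bar x,\bar y)$, estimates the term $\inf_\xi\I_\xi\psi_1(\bar x)-\sup_\xi\I_\xi\psi_2(\bar y)$ by choosing the direction $\bar\xi=(\bar x-\bar y)/|\bar x-\bar y|$, and estimates the term $\sup_\xi\I_\xi\psi_1(\bar x)-\inf_\xi\I_\xi\psi_2(\bar y)$ by the concavity comparison of Lemma \ref{lem0}, which replaces an arbitrary direction by one orthogonal to $\bar x-\bar y$. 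Everything reduces to the sign of an explicit one-dimensional quantity $g_c(\alpha)$, which is negative for small $\alpha>0$ exactly when $h_{c^2}(s)<0$, and Lemma \ref{lemma} shows this happens precisely for $s>s_0$ with $s_0\in(\tfrac12,1)$ an explicit root. In particular the threshold $s_0$ does not arise from a soft ``nonlocal approximates local as $s\to1^-$'' perturbation (which would anyway require keeping the normalizing constant $C_s$ that your sketch drops), but from an exact sign condition on a logarithmic integral. If you want to salvage your write-up, the realistic path is to abandon the measure-theoretic oscillation decay and develop the doubling-of-variables estimate directly, which is where the structural use of both extremal directions actually enters.
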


Our approach to regularity is based on viscosity techniques, so non-divergence in nature and based on maximum principle and barriers \`a la Ishii-Lions (see e.g. \cite{IL} and \cite{BCI} respectively for local and nonlocal strictly elliptic equations).

 The one by Ros-Oton and Serra is of more variational nature based on De Giorgi's version of De Giorgi-Nash-Moser theorem. A compactness argument allows to get variable kernels; the fully nonlinear version of this argument was implemented in \cite{serra}. 

\medskip

As far as the regularity  up to the boundary is concerned, we consider solutions to the problem

\begin{equation}\label{eqDir2}
\begin{cases}
\I u=f(x) & \text{in $\Omega$}\\
u=0 & \text{in $\R^N\backslash\Omega$,}
\end{cases}
\end{equation}
where $f\in C(\Omega)\cap L^\infty(\Omega)$ and $\Omega$ is bounded.

\begin{theorem}\label{th3}
Let $\Omega$ be a $C^2$-bounded domain and let $s_0\in(\frac12,1)$ be the lower bound introduced in Theorem \ref{th1}. If $u$ is a viscosity solution of \eqref{eqDir2} with  $s>s_0$, then $u\in C^{0,\alpha}(\overline\Omega)$ for some $\alpha=\alpha(s)\in(0,1)$.
\end{theorem}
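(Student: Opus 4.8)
The plan is to derive Theorem \ref{th3} from the interior estimate of Theorem \ref{th1} by combining it with a boundary barrier argument that controls the solution near $\partial\Omega$. Since $\Omega$ is a $C^2$-bounded domain, at every boundary point $x_0\in\partial\Omega$ we have both an interior and an exterior tangent ball of a uniform radius $\rho_0>0$. The first step is to construct an explicit supersolution (and, symmetrically, a subsolution) of $\I v \le -\|f\|_{L^\infty(\Omega)}$ in a neighborhood of $x_0$ which vanishes at $x_0$, is nonnegative on $\R^N\setminus\Omega$, and grows like $\dist(x,\partial\Omega)^\beta$ for a suitable exponent $\beta\in(0,2s-1)$. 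A natural candidate is $v(x)=A\,\bigl(d(x)\bigr)_+^\beta$, possibly truncated away from the boundary, where $d(x)=\dist(x,\partial\Omega)$ near the boundary and $A$ is large; one should check that $\I v$ can be made very negative by using the concavity of $t\mapsto t^\beta$ along the extremal direction realizing $\sup_\xi \I_\xi v$ and the boundedness (for $\beta<2s$) of $\I_\xi v$ along all directions. Here the nonlocal tails are finite precisely because $\beta<2s$, and the key point is that the \emph{sup} part of $\I$ picks up a strongly negative contribution from a direction transverse to $\partial\Omega$, exactly as in the classical second-order barrier $d^\beta$ for $\lambda_1+\lambda_N$.

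The second step is the comparison: since $u$ is bounded, $\|u\|_{L^\infty}\le M$, and $v(x)\ge M$ outside a small ball $B_{r_0}(x_0)$ (by choosing $A$ large enough relative to $r_0^{-\beta}$), the function $v$ dominates $u$ on $\R^N\setminus(\Omega\cap B_{r_0}(x_0))$, and $\I v\le f\le \I u$ in $\Omega\cap B_{r_0}(x_0)$ in the viscosity sense. Applying the comparison principle for $\I$ (which holds for this class of operators; it is implicitly used already for Theorem \ref{th1} and follows from the Ishii-Lions type machinery for nonlocal equations, cf.\ \cite{BCI}) we conclude $u\le v$ in $\Omega\cap B_{r_0}(x_0)$, hence $u(x)\le A\,d(x)^\beta$ near $x_0$; symmetrically $u(x)\ge -A\,d(x)^\beta$. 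Since $u=0$ on $\R^N\setminus\Omega$, this gives the oscillation bound
\begin{equation}\label{eq:bdry-osc}
\abs{u(x)-u(y)}\le 2A\,\bigl(\dist(x,\partial\Omega)^\beta+\dist(y,\partial\Omega)^\beta\bigr)
\end{equation}
for $x,y$ near $\partial\Omega$, with a constant $A$ uniform in $x_0$ thanks to the uniform $C^2$ geometry. In particular $u\in C^{0,\beta}$ in a neighborhood of $\partial\Omega$.

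The third step is to glue the boundary regularity with the interior estimate \eqref{HolderEstimate}. Fix $x,y\in\overline\Omega$. If both points lie at distance $\ge \delta$ from $\partial\Omega$ for a fixed small $\delta$, apply Theorem \ref{th1} on $\omega=\{d>\delta\}$, $\omega'=\{d>\delta/2\}$, to bound $\abs{u(x)-u(y)}\le C|x-y|^{\alpha}$. If at least one of them, say $x$, satisfies $d(x)<\delta$, then either $\abs{x-y}$ is comparable to or larger than $d(x)$, in which case \eqref{eq:bdry-osc} (extended via the exterior value $u\equiv 0$) gives the bound, or $\abs{x-y}\ll d(x)$, in which case both points are interior with $d\gtrsim \abs{x-y}$ and one uses the \emph{scaled} interior estimate on the ball $B_{d(x)/2}(x)$, obtaining $\abs{u(x)-u(y)}\lesssim d(x)^{-\alpha}\abs{x-y}^{\alpha}\cdot(\text{osc of }u\text{ on }B_{d(x)})$; the oscillation factor is $\lesssim d(x)^\beta$ by \eqref{eq:bdry-osc}, so the powers of $d(x)$ combine to a nonnegative power provided $\beta\ge\alpha$, which we may arrange by taking $\alpha\le\beta$ (shrinking $\alpha$ if necessary). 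Taking the minimal exponent $\gamma=\min\{\alpha,\beta\}$ yields $u\in C^{0,\gamma}(\overline\Omega)$, and we relabel $\gamma$ as $\alpha$.

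The main obstacle is the construction and verification of the boundary barrier for the nonlocal operator $\I$: one must show that $\I_\xi v(x)$ is bounded above for all $\xi$ and strictly negative, with a quantitatively large negative value, for the direction normal to $\partial\Omega$, while carefully handling the nonlocal contributions coming from the region $\R^N\setminus\Omega$ where $v$ is only required to be nonnegative (not necessarily equal to $d^\beta$). The $C^2$ regularity of $\partial\Omega$ is used precisely to make $d^\beta$ a genuine (viscosity) supersolution in a full neighborhood, via the uniform interior/exterior ball condition; one also needs $s>s_0>\tfrac12$ so that $2s-1>0$ and a valid exponent $\beta\in(0,2s-1)$ exists, matching the restriction already present in Theorem \ref{th1}. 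The rest — the covering/gluing argument and the scaling of the interior estimate — is routine once \eqref{eq:bdry-osc} is in hand.
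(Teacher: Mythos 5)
Your first two steps (a barrier of the form $d^\beta$ near $\partial\Omega$ plus comparison, yielding $|u|\le C d^\beta$) are in substance the paper's Propositions \ref{propdist} and \ref{probarr}, but your verification sketch of the barrier is imprecise on exactly the hard point. In $\I=\inf_\xi\I_\xi+\sup_\xi\I_\xi$ the strongly negative term is the one you are free to choose, i.e.\ the \emph{inf} part evaluated in the normal direction, where the one-dimensional profile gives $l(\beta)d^{\beta-2s}$ with $l(\beta)<0$ precisely for $\beta<s$ (Lemma \ref{lem1D}; your restriction $\beta<2s-1$ is stronger than needed). The real work is the \emph{sup} part, which must be bounded from above \emph{uniformly over all directions}: along tangential directions $\I_\xi d^\beta$ is not bounded --- curvature terms produce contributions of order $d^{\beta-1}$ --- and one must check that every direction yields at worst $o(d^{\beta-2s})$. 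The paper does this by a blow-up/compactness argument (rescaling $\tau=d(x_n)t$, Lemma \ref{lembar}) showing that along any sequence of near-optimal directions the rescaled operator converges to $|\nu\cdot\bar\xi|^{2s}\,l(\beta)\le 0$; your appeal to ``boundedness of $\I_\xi v$ along all directions'' would fail as stated, though the conclusion $\I d^\beta\le -m\,d^{\beta-2s}$ is correct for $\beta<s$.

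The genuine gap is in your third (gluing) step. The scaled interior estimate you invoke does not carry a factor of the local oscillation: Theorem \ref{th1} bounds the $C^{0,\alpha}$ seminorm by a constant depending on $\|u\|_{L^\infty(\R^N)}$, and this dependence cannot be replaced by $\operatorname{osc}_{B_{d(x)}(x)}u$ because the nonlocal tails see the values of $u$ in all of $\R^N$, which near the boundary are of size $\|u\|_{L^\infty}$, not of size $d(x)^\beta$. So after rescaling by $d(x)$ you only get $|u(x)-u(y)|\le C\,d(x)^{-\alpha}|x-y|^\alpha$ with $C$ of order $\|u\|_{L^\infty(\R^N)}$, and the powers of $d(x)$ do not ``combine to a nonnegative power'' as you claim. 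The argument can be repaired: interpolating this bound with \eqref{globalestimate} (use the interior bound when $|x-y|\le d(x)^{(\alpha+\beta)/\alpha}$ and the boundary growth otherwise) still gives $u\in C^{0,\gamma}(\overline\Omega)$ with the smaller exponent $\gamma=\alpha\beta/(\alpha+\beta)$. The paper avoids this altogether: after establishing $|u|\le C d^\alpha$ (Proposition \ref{probarr}), it reruns the doubling argument of Theorem \ref{th1} globally with $\phi(x,y)=u(x)-u(y)-M|x-y|^\alpha$ and no localization term, using the boundary growth only to show that a positive maximum cannot occur with a point outside $\Omega$; the interior machinery then yields the contradiction directly, giving $C^{0,\alpha}(\overline\Omega)$ with the same exponent and without any comparison-principle or covering/scaling step.
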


Let us remark that the proofs of Theorems \ref{th1} and \ref{th3} can be adapted to a larger class of operators that include \eqref{eq2}. For instance one can consider for $a,b>0$ and integers $1\leq k,m\leq N$:
$$
J_{k,m}u=a \I_k^-u+b \I_m^+u
$$
where $\I_k^-u$ and $\I_m^+u$ are respectively the $k$-minimal and the $m$-maximal nonlocal truncated Laplacians, see \cite{BGT}. For sake of clarity, we made the choice of treating the case $a=b=k=m=1$ i.e. \eqref{eq2}.

\medskip

Building on Theorem \ref{th1}
, one obtains the following Liouville theorem. 

\begin{theorem}\label{th2Intro}
Let $s_0$ be defined in Theorem \ref{th1}. 
If $s>s_0$ and $u\in C(\mathbb R^N)$ is a bounded viscosity solution of \eqref{eq3} with $f \equiv 0$ and $\Omega\equiv\R^N$, then $u$ is constant. 
\end{theorem}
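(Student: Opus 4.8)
The plan is to derive the Liouville property from the interior Hölder estimate of Theorem~\ref{th1} via a scaling argument. Suppose $u\in C(\R^N)$ is bounded and satisfies $\I u=0$ in all of $\R^N$. For $R>0$ set $u_R(x)=u(Rx)$. Because each $\I_\xi$ is $2s$-homogeneous under dilations, $\I(u_R)(x)=R^{2s}(\I u)(Rx)=0$, so $u_R$ is again a bounded viscosity solution of \eqref{eq3} with $f\equiv0$ on $\R^N$, and crucially $\|u_R\|_{L^\infty(\R^N)}=\|u\|_{L^\infty(\R^N)}$ is independent of $R$.

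Next I would apply the quantitative estimate \eqref{HolderEstimate} to $u_R$ on a fixed pair of balls, say $\omega=B_1\subset\subset\omega'=B_2\subset\subset\R^N$. Since the constant $C$ in Theorem~\ref{th1} depends only on $s$, on $\dist(\omega,\partial\omega')=1$, on $\|u_R\|_{L^\infty(\R^N)}=\|u\|_{L^\infty(\R^N)}$, and on $\|f\|_{L^\infty(\omega')}=0$, we get $\|u_R\|_{C^{0,\alpha}(B_1)}\le C$ with $C$ independent of $R$. In particular $[u_R]_{C^{0,\alpha}(B_1)}\le C$, which unwinding the scaling means $|u(x)-u(y)|\le C R^{-\alpha}|x-y|^\alpha$ for all $x,y\in B_R$. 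Letting $R\to\infty$ with $x,y$ fixed forces $u(x)=u(y)$, so $u$ is constant.

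The only subtlety — and the step I would check most carefully — is the scaling invariance of the operator, i.e.\ that $\I(u(R\,\cdot))(x)=R^{2s}\,\I u(Rx)$. This follows by the change of variables $\tau\mapsto R\tau$ in the one-dimensional integral \eqref{eq1}, which gives $\I_\xi(u(R\,\cdot))(x)=R^{2s}\,\I_\xi u(Rx)$ for every fixed $\xi\in\mathbb S^{N-1}$; taking $\inf_\xi$ and $\sup_\xi$ and adding preserves the common factor $R^{2s}$. One should also note that $u_R$ remains bounded on all of $\R^N$, not just on $B_2$, so that it is an admissible competitor for Theorem~\ref{th1}; this is immediate since $\|u_R\|_{L^\infty(\R^N)}=\|u\|_{L^\infty(\R^N)}$. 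No obstacle beyond this bookkeeping is expected: the heavy lifting is entirely contained in the interior estimate of Theorem~\ref{th1}.
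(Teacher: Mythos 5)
Your proposal is correct and matches the paper's own argument essentially verbatim: the paper proves Theorem \ref{th2Intro} by setting $v(x)=u(Rx)$, using the $2s$-homogeneity to see $\I v=0$, applying the estimate of Theorem \ref{th1} with $\omega=B_1(0)$, $\omega'=B_2(0)$ (with constant depending only on $s$ and $\|u\|_{L^\infty(\R^N)}$), and rescaling to get $|u(x)-u(y)|\le C R^{-\alpha}|x-y|^{\alpha}$ on $B_R(0)$, then letting $R\to\infty$. No gaps; your bookkeeping about the change of variables and the $R$-independence of the $L^\infty$ bound is exactly the point the paper relies on.
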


Compactness results are an immediate consequence of Theorem \ref{th3}. So it would be possible, using Krein-Rutman theorem, to prove, for the exterior Dirichlet problem for $\I$, the existence of the principal eigenvalue \`a la Berestycki-Nirenberg-Varadhan \cite{BNV} and its related eigenfunction. 

\bigskip
\noindent
\textbf{Notations.} $\mathbb S^{N-1}$ denotes the unit sphere in $\R^N$. The ball of radius $r>0$ centered at $x\in \R^N$ is denoted by $B_r(x)$.\\
The scalar product between $x$ and $y$ in $\R^N$ is $x\cdot y$ and $|x|=\sqrt{x\cdot x}$.\\
The positive part of $a\in\R$ is $a_+=\max\left\{a,0\right\}$.\\
Given $u:\R^N\mapsto\R$, we shall adopt the standard notation
\begin{equation}\label{not}
\delta(u,x,y)=u(x+y)+u(x-y)-2u(x)\qquad x,y\in\R^N.
\end{equation}

\section{Preliminaries}
In this section we will prove four technical lemmas that are needed in the proofs of Theorems \ref{th1} and \ref{th3}.

\begin{lemma}\label{lemma}
Let $c\in(1,2]$ and let $h_c:[\frac12,1]\mapsto\R$ be the function defined by the formula
\begin{equation}\label{hc}
h_c(s)=\int\limits_0^{c}\frac{\log|1-\tau^2|}{\tau^{1+s}}\,d\tau+\int\limits_c^{+\infty}\frac{\log(1+\tau)}{\tau^{1+s}}\,d\tau.
\end{equation}
Then there exists a unique $s_0=s_0(c)\in(\frac12,1)$ such that $h_c(s)>0$ for $s\in\left[\frac12,s_0\right)$, $h_c(s_0)=0$ and $h_c(s)<0$ for $s\in(s_0,1]$.
\end{lemma}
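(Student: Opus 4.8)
The plan is to study the sign of $h_c(s)$ by first establishing that $h_c$ is continuous and strictly monotone on $[\frac12,1]$, and then checking the sign at the two endpoints. Continuity in $s$ follows from dominated convergence once one checks that the integrand is absolutely integrable, uniformly for $s$ in compact subsets of $[\frac12,1]$: near $\tau=0$ the factor $\log|1-\tau^2|\sim -\tau^2$ kills the singularity $\tau^{-1-s}$; near $\tau=1$ the logarithmic singularity of $\log|1-\tau^2|$ is integrable; and near $\tau=+\infty$ the integrand behaves like $\tau^{-1-s}\log\tau$, which is integrable since $s\geq\frac12>0$. The same estimates show $h_c$ is finite everywhere on $[\frac12,1]$.

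Next I would prove strict monotonicity. Differentiating under the integral sign (justified by the same domination, noting the extra factor $\log(1/\tau)$ or $\log\tau$ only helps at $0$ and is harmless at $\infty$ for $s$ bounded away from $0$), one gets
\begin{equation}\label{hcprime}
h_c'(s)=-\int\limits_0^{c}\frac{\log|1-\tau^2|\,\log\tau}{\tau^{1+s}}\,d\tau-\int\limits_c^{+\infty}\frac{\log(1+\tau)\,\log\tau}{\tau^{1+s}}\,d\tau.
\end{equation}
The second integral is manifestly positive (both logarithms are positive on $[c,+\infty)\subset[1,+\infty)$), so it contributes $-(\text{positive})<0$. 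For the first integral, split $[0,c]$ at $\tau=1$: on $(1,c]$ one has $\log|1-\tau^2|$ of variable sign but $\log\tau>0$, while on $(0,1)$ both $\log|1-\tau^2|<0$ and $\log\tau<0$, so $\log|1-\tau^2|\log\tau>0$ there. The delicate point is therefore to show that the negative contribution from $(0,1)$ is not overwhelmed — but in fact we do not need the precise sign of each piece: it suffices to show $h_c'(s)<0$ for all $s\in[\frac12,1]$. I expect this to be the main obstacle, and the cleanest route is to combine the two integrals over $[1,c]$ and $[c,\infty)$ with a change of variables $\tau\mapsto1/\tau$ on $(0,1)$ to compare them against the dominant $(0,1)$ term; alternatively, one shows directly that $h_c$ is strictly decreasing by proving $h_c(s_2)-h_c(s_1)<0$ for $s_1<s_2$ via the pointwise inequality between the kernels $\tau^{-1-s_1}$ and $\tau^{-1-s_2}$ (which cross exactly at $\tau=1$) together with the fact that the bracketed function $\delta$-type weight changes sign appropriately. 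Since the reference asserts a \emph{unique} $s_0$, strict monotonicity (not just $h_c(\frac12)>0>h_c(1)$) must be genuinely available, so I would invest the effort in a clean monotonicity argument.

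Finally I would evaluate the endpoints. At $s=1$ one shows $h_c(1)<0$: the integrals can be computed or estimated explicitly, and the point is that for $s=1$ the negative mass coming from $\log|1-\tau^2|<0$ on a full neighbourhood of $\tau=1$ dominates; in the limit $s\to1$ this is consistent with the normalization making $\I_\xi$ converge to the second derivative along $\xi$, and the constant $c\in(1,2]$ is chosen precisely so that this endpoint is negative. At $s=\frac12$ one shows $h_c(\frac12)>0$: here the tail integral $\int_c^\infty \tau^{-3/2}\log(1+\tau)\,d\tau$ is large and positive while the $[0,c]$ contribution is controlled, so the total is positive. Combining continuity, strict monotonicity, $h_c(\frac12)>0$ and $h_c(1)<0$, the intermediate value theorem yields a unique zero $s_0=s_0(c)\in(\frac12,1)$ with the stated sign pattern, completing the proof.
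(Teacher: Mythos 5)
The scaffolding of your plan (integrability, differentiation under the integral sign, the formula for $h_c'$, endpoint signs plus intermediate value theorem) matches the paper, but the heart of the argument --- strict monotonicity of $h_c$ on $[\frac12,1]$ --- is precisely the step you leave unproved, and neither of the two routes you sketch is shown to work. For the kernel-crossing idea: writing $h_c(s_2)-h_c(s_1)$ as an integral of the weight ($\log|1-\tau^2|$ on $(0,c)$, $\log(1+\tau)$ on $(c,\infty)$) against $\tau^{-1-s_2}-\tau^{-1-s_1}$, on the interval $(1,\min(c,\sqrt2))$ the weight $\log(\tau^2-1)$ is negative \emph{and} the kernel difference is negative, so this piece has the wrong sign and nothing in your text controls it; the same portion $\tau\in(1,\sqrt2)$ makes the direct sign analysis of your formula for $h_c'$ inconclusive. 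Since you yourself flag this as ``the main obstacle'', the proposal as written has a genuine gap at its central step. The endpoint signs are also only asserted: at $s=\frac12$ the statement $h_c(\frac12)>0$ is a quantitative fact, not a consequence of the qualitative remark that the tail integral is positive.

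The paper circumvents monotonicity of $h_c$ altogether. Integrating by parts in the expression for $h_c'$ yields the identity
\begin{equation*}
h_c'(s)=-\frac1s h_c(s)+g_c(s),\qquad g_c(s)=\frac1s\left(c^{-s}\log(c)\log(c-1)+2\int\limits_0^{c}\frac{\tau^{1-s}\log\tau}{1-\tau^2}\,d\tau-\int\limits_c^{+\infty}\frac{\log\tau}{(1+\tau)\tau^{s}}\,d\tau\right),
\end{equation*}
and for $c\in(1,2]$ every term of $g_c$ is nonpositive (this is exactly where $c\le 2$ enters, via $\log(c-1)\le 0$; note also that $\log\tau/(1-\tau^2)<0$ on both $(0,1)$ and $(1,c)$), so $g_c(s)<0$ and hence $h_c'(s)<-\frac1s h_c(s)$. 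This says $s\mapsto s\,h_c(s)$ is strictly decreasing, which is weaker than monotonicity of $h_c$ but, since $s>0$, still forces the sign pattern and the uniqueness of the zero once one knows $h_c(1)<0<h_c(\frac12)$; equivalently, wherever $h_c\ge0$ one has $h_c'<0$, so every zero is a strict downcrossing. The endpoint signs are then checked from the explicit values $h_c(1)=\frac{c-1}{c}\log(c-1)-\log c$ and $\frac12h_c(\frac12)=\pi+\frac{\sqrt c-1}{\sqrt c}\log(\sqrt c-1)-\frac{\sqrt c+1}{\sqrt c}\log(\sqrt c+1)$, both of which have the claimed sign for $c\in(1,2]$. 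To repair your proof you would either need to supply a genuine proof that $h_c$ (or some positive multiple such as $s h_c(s)$) is strictly decreasing, or replace monotonicity by an argument of this differential-inequality type.
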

\begin{proof}
We first note that for $s\in[\frac12,1]$ one has
$$
\frac{\log|1-\tau^2|}{\tau^{1+s}}\in L^1(0,c)\quad\text{and}\quad \frac{|\log(1+\tau)|}{\tau^{1+s}}\in L^1(c,+\infty).
$$
Moreover 
\begin{equation*}
\begin{split}
\left|\frac{\partial}{\partial s}\frac{\log|1-\tau^2|}{\tau^{1+s}}\right|&=\frac{|\log|1-\tau^2|\log(\tau)|}{\tau^{1+s}}\\
&\leq|\log|1-\tau^2|\log(\tau)|\max\left\{\frac{1}{\tau^{2}},\frac{1}{\tau^{\frac32}}\right\}\in L^1(0,c)
\end{split}
\end{equation*} 
and
\begin{equation*}
\begin{split}
\left|\frac{\partial}{\partial s}\frac{\log(1+\tau)}{\tau^{1+s}}\right|&=\frac{|\log(1+\tau)\log(\tau)|}{\tau^{1+s}}\\
&\leq\frac{|\log(1+\tau)\log(\tau)|}{\tau^{\frac32}}\in L^1(c,+\infty).
\end{split}
\end{equation*} 
Hence $h_c\in C^1([\frac12,1])$ and 
$$
h_c'(s)=-\int\limits_0^{c}\frac{\log|1-\tau^2|\log(\tau)}{\tau^{1+s}}\,d\tau-\int\limits_c^{+\infty}\frac{\log(1+\tau)\log(\tau)}{\tau^{1+s}}.
$$
Integrating by parts, we obtain that 
\begin{equation}\label{eq4}
h_c'(s)=-\frac1sh_c(s)+g_c(s)
\end{equation}
where 
$$
g_c(s)=\frac1s\left(c^{-s}\log(c)\log(c-1)+2\int\limits_0^{c}\frac{\tau^{1-s}\log(\tau)}{1-\tau^2}\,d\tau-\int\limits_c^{+\infty}\frac{\log(\tau)}{(1+\tau)\tau^s}\,d\tau\right).
$$
Since $c\in(1,2]$, then $g_c(s)<0$ and 
$$
h_c'(s)<-\frac1sh_c(s)\quad\forall s\in\left[\frac12,1\right].
$$
Hence the map $s\mapsto sh_c(s)$ is decreasing and
\begin{equation}\label{eq5}
h_c(1)\leq sh_c(s)\leq\frac 12h_c\left(\frac12\right)\qquad\forall s\in\left[\frac12,1\right].
\end{equation}
Moreover, explicit computations yield
\begin{equation*}
h_c(1)=\frac{c-1}{c}\log(c-1)-\log(c)
\end{equation*}
and 
\begin{equation*}
\frac12h_c\left(\frac12\right)=\pi+\frac{\sqrt{c}-1}{\sqrt{c}}\log(\sqrt{c}-1)-\frac{\sqrt{c}+1}{\sqrt{c}}\log(\sqrt{c}+1).
\end{equation*}
Given that
$$
h_c(1)<0<h_c\left(\frac12\right),
$$
using \eqref{eq5} and the monotonicity of the map $s\mapsto sh_c(s)$, we conclude that there exists a unique $s_0=s_0(c)\in(\frac12,1)$ such that $h_c(s_0)=0$, $h_c(s)>0$ for $s\in\left[\frac12,s_0\right)$ and $h_c(s)<0$ for $s\in\left[s_0,1\right]$.
\end{proof}

If $\Omega\subset\mathbb R^N$ is domain, we denote by $d(x)$ the distance function from the complement of $\Omega$, that is
\begin{equation}\label{dist}
d(x):=
\begin{cases}
\text{dist}(x,\partial\Omega) & \text{if $x\in\Omega$}\\
0 & \text{if $x\in\mathbb R^N\backslash\Omega$.}
\end{cases}
\end{equation}

\begin{lemma}\label{lembar}
Let $\Omega$ be a $C^1$-bounded domain of $\mathbb R^N$ and suppose that $\left\{x_n\right\}_n\subset\Omega$ and $\left\{\xi_n\right\}_n\subset\mathbb S^{N-1}$ are respectively sequences such that $\displaystyle\lim_{n\to\infty}x_n=\bar x\in\partial\Omega$ and $\displaystyle\lim_{n\to\infty}\xi_n=\bar\xi\in\mathbb S^{N-1}$. Then for any $\tau\in\mathbb R$ it holds
\begin{equation}\label{eq1lemma}
\lim_{n\to\infty}\frac{d\left(x_n+d(x_n)\tau\xi_n\right)}{d(x_n)}=\left(1+\tau \nu(\bar x)\cdot\bar\xi\right)_+
\end{equation}
where $\nu(\bar x)$ denotes the unit inner normal to $\partial\Omega$ at $\bar x$.
\end{lemma}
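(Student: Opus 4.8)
The plan is to reduce the statement to a purely local, essentially one-variable computation near the boundary point $\bar x$, using the $C^1$ regularity of $\partial\Omega$. First I would fix notation: write $r_n = d(x_n)\to 0$ and let $y_n = x_n + r_n\tau\xi_n$ be the perturbed point. Since $x_n\to\bar x\in\partial\Omega$ and $r_n\to 0$, also $y_n\to\bar x$, so both $x_n$ and $y_n$ eventually lie in a fixed neighborhood of $\bar x$ where, by the $C^1$-boundedness of $\Omega$, the boundary is a $C^1$ graph and the distance function $d$ is comparable to the signed distance. The key geometric input is the first-order expansion of the distance function: for points $z$ close to $\bar x$ one has $d(z) = (z-\bar x)\cdot\nu(\bar x) + o(|z-\bar x|)$ on the side of $\Omega$, with the understanding that $d\geq 0$ and $d$ vanishes outside $\Omega$; more precisely, writing $\bar x$ as the origin and $\partial\Omega$ locally as the graph $x_N = \varphi(x')$ with $\varphi(0)=0$, $\nabla\varphi(0)=0$, one gets $d(z) = (z_N - \varphi(z'))_+ \,(1+o(1))$ uniformly as $z\to\bar x$, and $\nu(\bar x) = e_N$.

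Next I would apply this expansion at both $x_n$ and $y_n$. Since $x_n\in\Omega$ with $d(x_n)=r_n>0$, the expansion gives $r_n = (x_n-\bar x)\cdot\nu(\bar x) + o(r_n)$ — here using that $|x_n - \bar x|$ is comparable to $r_n$ because $x_n\to\bar x$ along points whose distance to $\partial\Omega$ is exactly $r_n$; one has to be slightly careful and only claim $|x_n-\bar x| = O(r_n)$ is not automatic, so instead I would argue directly from $d(x_n) = (x_n - \bar x)\cdot\nu(\bar x) + o(|x_n-\bar x|)$ and the fact that $d$ is $1$-Lipschitz to control the error terms. For $y_n = x_n + r_n\tau\xi_n$ we then compute
$$
d(y_n) = \big((y_n-\bar x)\cdot\nu(\bar x)\big)_+ + o(|y_n-\bar x|) = \big(r_n + r_n\tau\,\xi_n\cdot\nu(\bar x) + o(r_n)\big)_+ + o(r_n),
$$
using $(x_n-\bar x)\cdot\nu(\bar x) = r_n + o(r_n)$ and $\xi_n\to\bar\xi$. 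Dividing by $r_n = d(x_n)$ and letting $n\to\infty$, the inner quantity tends to $1 + \tau\,\bar\xi\cdot\nu(\bar x)$, and since $t\mapsto t_+$ is continuous we conclude
$$
\lim_{n\to\infty}\frac{d(y_n)}{d(x_n)} = \big(1+\tau\,\nu(\bar x)\cdot\bar\xi\big)_+,
$$
which is \eqref{eq1lemma}.

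The main obstacle is making the error terms genuinely uniform and correctly handling the positive part. Two points need care. First, the first-order expansion of $d$ near a $C^1$ boundary point must be justified: away from the cut locus $d$ is as smooth as the boundary, but near a general $C^1$ (not $C^{1,1}$) domain the cleanest route is to use the graph representation and the elementary fact that $\dist(z,\{x_N = \varphi(x')\})$ differs from $(z_N-\varphi(z'))$ by $o(|z-\bar x|)$ when $\nabla\varphi$ is continuous and vanishes at the base point — this is where $C^1$ regularity is exactly what is needed. Second, when $1 + \tau\,\nu(\bar x)\cdot\bar\xi < 0$ the point $y_n$ escapes $\Omega$ (or at least has distance $o(r_n)$ to the complement), so $d(y_n)/d(x_n)\to 0$; the $(\cdot)_+$ in the displayed limit takes care of this automatically, but in the write-up I would split into the cases $1+\tau\,\nu(\bar x)\cdot\bar\xi > 0$, $<0$, and $=0$ to keep the $o(r_n)$ bookkeeping transparent, the borderline case following from the $1$-Lipschitz bound $0 \le d(y_n) \le d(x_n) + |y_n - x_n| = r_n(1 + |\tau|)$ together with $d(y_n) \le (r_n + r_n\tau\,\xi_n\cdot\nu(\bar x))_+ + o(r_n)$.
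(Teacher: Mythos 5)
Your overall plan (a first--order expansion of the distance near $\bar x$, then continuity of $t\mapsto t_+$) is the same as the paper's, but the bookkeeping in your central computation has a genuine gap: all your expansions are based at the \emph{fixed} point $\bar x$, with errors $o(|z-\bar x|)$, and these are not $o(d(x_n))$, because nothing in the hypotheses prevents $x_n$ from approaching $\bar x$ tangentially, with $d(x_n)\ll|x_n-\bar x|$. In particular the identity you invoke, $(x_n-\bar x)\cdot\nu(\bar x)=d(x_n)+o(d(x_n))$, is false in general, even for smooth domains: for the unit ball, written near $\bar x=0$ as the region above the graph of $\varphi(x')\approx|x'|^2/2$ with $\nu(\bar x)=e_N$, the points $x_n=(t_n',\varphi(t_n')+t_n^3)$ with $|t_n'|=t_n\to0$ satisfy $d(x_n)\approx t_n^3$ while $(x_n-\bar x)\cdot\nu(\bar x)\approx t_n^2/2$. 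For the same reason the replacement of $o(|y_n-\bar x|)$ by $o(r_n)$ in your displayed formula is unjustified, and the remedy you suggest ($1$-Lipschitz continuity of $d$) only gives $|d(y_n)-d(x_n)|\le|\tau|r_n$, which bounds the ratio but does not identify the limit. Note that your ``more precise'' multiplicative statement $d(z)=(z_N-\varphi(z'))_+(1+o(1))$ is correct and would do the job, but you abandon it in the actual computation, where the $\varphi$-terms --- exactly the quantities that can dominate $r_n$ --- have been dropped.

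The missing idea is to expand around the \emph{moving} point rather than around $\bar x$, so that the increment has size $O(r_n)$ and the error is genuinely $o(r_n)$. This is what the paper does: it introduces the signed distance $\tilde d$, uses that $\tilde d$ is $C^1$ in a neighborhood of $\partial\Omega$ (the lemma is only applied to $C^2$ domains later), writes $\tilde d(x_n+d(x_n)\tau\xi_n)=\tilde d(x_n)+d(x_n)\tau\,\nabla\tilde d(x_n)\cdot\xi_n+o(d(x_n))$, divides by $d(x_n)$, lets $\nabla\tilde d(x_n)\to\nu(\bar x)$, and only then passes to positive parts via $d=\tilde d_+$. Equivalently, staying in your graph picture, you must keep $\varphi(x_n')$ and $\varphi(y_n')$ and use the uniform continuity of $\nabla\varphi$ on the segment joining $x_n'$ to $y_n'$ (whose length is at most $|\tau|r_n$) to get $\varphi(y_n')=\varphi(x_n')+o(r_n)$; the dangerous zeroth-order terms then cancel, since $y_{n,N}-\varphi(y_n')=\bigl(x_{n,N}-\varphi(x_n')\bigr)+r_n\tau\,\xi_n\cdot e_N+o(r_n)$, and dividing by $x_{n,N}-\varphi(x_n')=r_n(1+o(1))$ gives the claimed limit. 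With that correction your graph-based route is sound (and it genuinely only needs $C^1$ regularity, whereas the signed-distance route implicitly relies on more smoothness near the boundary), but as written the key step is not justified.
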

\begin{proof}
Let 
$$
\tilde d(x):=
\begin{cases}
\text{dist}(x,\partial\Omega) & \text{if $x\in\Omega$}\\
-\text{dist}(x,\partial\Omega) & \text{if $x\in\mathbb R^N\backslash\Omega$}.
\end{cases}
$$
be the signed distance function from $\partial\Omega$. Note that 
\begin{equation}\label{5eq1}
d(x)=\tilde d_+(x)\quad\forall x\in\mathbb R^N
\end{equation}
and that, by the assumption on $\Omega$, there exists $\mu>0$ such that $\tilde d \in C^1(\Omega_\mu)$, where $\Omega_\mu=\left\{x\in\mathbb R^N:\;|\tilde d(x)|<\mu\right\}$. Moreover $\nabla \tilde d(x)=\nu(x)$ for any $x\in\partial\Omega$. \\
Let $\tau\in\mathbb R$. Since $x_n+d(x_n)\tau\xi_n\to\bar x$ as $n\to\infty$, we infer that, for $n$ sufficiently large, $x_n+d(x_n)\tau\xi_n\in B_\delta(\bar x)\subset\Omega_\delta$. Hence
$$
\tilde d(x_n+d(x_n)\tau\xi_n)=\tilde d(x_n)+\tilde d(x_n)\tau \nabla \tilde d(x_n)\cdot\xi_n+o(\tilde d(x_n))\quad \text{as $n\to\infty$}.
$$
Since $\tilde d(x)=d(x)$ for any $x\in\Omega$, then
\begin{equation*}
\frac{\tilde d(x_n+d(x_n)\tau\xi_n)}{d(x_n)}=1+\tau \nabla \tilde d(x_n)\cdot\xi_n+\frac{o( d(x_n))}{d(x_n)}\quad \text{as $n\to\infty$}
\end{equation*}
and 
 \begin{equation}\label{5eq2}
\lim_{n\to\infty}\frac{\tilde d(x_n+d(x_n)\tau\xi_n)}{d(x_n)}=1+\tau \nabla \tilde d(\bar x)\cdot\bar\xi=1+\tau \nu(\bar x)\cdot\bar\xi.
\end{equation}
 By \eqref{5eq1} and \eqref{5eq2}, the conclusion  \eqref{eq1lemma} follows.
\end{proof}

\begin{lemma}\label{lem1D}
For $s\in(0,1)$ and $\alpha\in(0,2s)$ let 
\begin{equation}\label{eql}
l(\alpha)=\int\limits_0^{+\infty}\frac{{(1+\tau)}^\alpha+{(1-\tau)}^\alpha_+-2}{\tau^{1+2s}}\,d\tau.
\end{equation}
Then
\begin{equation}\label{eq1lem1D}
l(\alpha)\begin{cases}
<0 & \text{if $\alpha<s$}\\
=0 & \text{if $\alpha=s$}\\
>0 & \text{if $\alpha>s$}\\
\end{cases}
\end{equation}
\end{lemma}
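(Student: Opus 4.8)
The plan is to study $l(\alpha)$ as a function of $\alpha\in(0,2s)$ and to show that it is strictly increasing, vanishing exactly at $\alpha=s$. To make the integral well-behaved near $\tau=1$, note that $(1-\tau)_+^\alpha$ is bounded and continuous there, while near $\tau=1$ one has $(1+\tau)^\alpha + (1-\tau)_+^\alpha - 2 = 2^\alpha - 2 + O(|1-\tau|^\alpha)$, so the integrand is $O(|1-\tau|^{\alpha-1})$ near $1$, which is integrable since $\alpha>0$; near $\tau=0$ we have $(1+\tau)^\alpha + (1-\tau)^\alpha - 2 = \alpha(\alpha-1)\tau^2 + O(\tau^4)$, so the integrand is $O(\tau^{1-2s})$, integrable since $s<1$; and near $\tau=+\infty$ the integrand is $O(\tau^{\alpha-1-2s})$, integrable since $\alpha<2s$. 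Thus $l$ is well-defined on $(0,2s)$.

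First I would compute the two special values. For $\alpha = s$ one can recognize, after the substitution motivated by \eqref{eq1}, that $l(s)$ is a multiple of $\I_{e_1}(|x_1|^s)$ evaluated in a way that exploits the scaling homogeneity: the function $|x|^s$ on $\mathbb R$ is, up to the constant $C_s$, a ``$2s$-harmonic-like'' power in the sense that its one-dimensional fractional-Laplacian-type operator of order $2s$ vanishes — indeed $l(s)=0$ is the classical fact that $(1+\tau)^s + (1-\tau)_+^s - 2$ integrated against $\tau^{-1-2s}$ is zero, the same computation that underlies the explicit $s$-harmonic functions $x_+^s$ for the half-line fractional Laplacian. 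I would either cite this or verify it by a direct substitution $\tau \mapsto 1/\tau$ on the tail combined with a Beta-function identity. Alternatively, and more robustly, I would differentiate in $\alpha$.

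The cleanest route is to show $l'(\alpha)>0$ on $(0,2s)$ and then invoke $l(s)=0$. Differentiating under the integral sign (justified by dominated convergence, with $\alpha$ restricted to a compact subinterval of $(0,2s)$ and using that $t^\alpha\log t$ is dominated near $0$, near $1$, and near $\infty$ by the same powers as above times a harmless logarithm) gives
\begin{equation}\label{lprime}
l'(\alpha)=\int\limits_0^{+\infty}\frac{(1+\tau)^\alpha\log(1+\tau)+(1-\tau)_+^\alpha\log(1-\tau)_+}{\tau^{1+2s}}\,d\tau,
\end{equation}
where the second term is interpreted as $0$ for $\tau\ge 1$. The function $\phi(\tau)=(1+\tau)^\alpha\log(1+\tau)+(1-\tau)_+^\alpha\log(1-\tau)_+$ satisfies $\phi(0)=0$, and one checks $\phi$ is convex on $[0,1]$ (both summands are) while on $[1,\infty)$ only the first, increasing and convex, term survives; moreover $\phi'(0)=0$. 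Hence $\phi(\tau)\ge 0$ is false in general near $\tau=1^-$ because $\log(1-\tau)_+\to-\infty$, so a crude sign argument fails; instead I would use the same ``second difference of a convex-like function'' trick used for $l$ itself: write $l'(\alpha) = \partial_\alpha$ of an integral whose integrand's second difference in $\tau$ around $\tau=0$ has a definite sign. Concretely, since $(1+\tau)^\alpha\log(1+\tau)+(1-\tau)^\alpha\log(1-\tau)$ for small $\tau$ expands as $[\alpha(\alpha-1)+ (2\alpha-1)]\tau^2/1 \cdot(\dots)$, its leading behavior is $(\alpha^2-1)\tau^2$-ish, and combining with the positive tail contribution one gets the net sign.

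I expect the main obstacle to be precisely this sign analysis of $l'$: unlike $l$, whose integrand's numerator $(1+\tau)^\alpha+(1-\tau)_+^\alpha-2$ changes sign in a controlled way (negative for $\tau$ near $0$ when $\alpha<1$, and the whole thing ultimately governed by the $\alpha$ versus $s$ comparison), the derivative integrand in \eqref{lprime} has a logarithmic singularity at $\tau=1^-$ and no obvious global sign. The way around it is to avoid $l'$ entirely and argue by a scaling/comparison principle: for $0<\beta<\gamma<2s$ the difference of integrands $[(1+\tau)^\gamma - (1+\tau)^\beta] + [(1-\tau)_+^\gamma - (1-\tau)_+^\beta]$ can be analyzed by splitting at $\tau=1$ and using that on $(0,1)$ the map $\alpha\mapsto a^\alpha$ is, for $a\in(0,2)$, such that $a^\gamma-a^\beta$ has the sign of $(\log a)$, i.e. positive for $a>1$ and negative for $a<1$ — then a symmetrization $\tau\mapsto 1/\tau$ on $(1,\infty)$ reduces everything to a single definite-sign integral over $(0,1)$, yielding strict monotonicity of $l$. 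Granting monotonicity and the identity $l(s)=0$ (which I would prove by the substitution $\tau=\frac{1-r}{1+r}$ turning the integral into a Beta integral, or simply cite the standard $s$-harmonicity of $x_+^s$), the three-way sign conclusion \eqref{eq1lem1D} follows immediately.
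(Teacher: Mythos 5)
Your reduction of the lemma to the two claims ``$l(s)=0$'' and ``$\alpha\mapsto l(\alpha)$ is strictly increasing on $(0,2s)$'' breaks down at the second claim, and not merely because the details are missing: the monotonicity is actually \emph{false} when $s$ is close to $1$, which is exactly the regime in which the lemma is used. Extending $l$ to $\alpha=0$ by $l(0^+)=-\tfrac{1}{2s}$, dominated convergence gives
\[
\lim_{\alpha\to 0^+}\frac{l(\alpha)-l(0^+)}{\alpha}
=\int_0^1\frac{\log(1-\tau^2)}{\tau^{1+2s}}\,d\tau+\int_1^{+\infty}\frac{\log(1+\tau)}{\tau^{1+2s}}\,d\tau .
\]
Near $\tau=0$ the first integrand behaves like $-\tau^{1-2s}$, so the first term diverges to $-\infty$ like $-\tfrac{1}{2(1-s)}$ as $s\to1^-$, while the second term stays bounded; hence $l'(0^+)<0$ for $s$ near $1$ (this quantity is of the same nature as the function $h_c$ of Lemma \ref{lemma}, whose sign change is what forces $s>s_0$ in the paper). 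Since $l(0^+)<0$ and $l(s)=0$, the function $l$ first decreases and then increases: no strengthening of a comparison argument between two exponents $\beta<\gamma$ can prove global strict monotonicity. Consistently, the ``single definite-sign integral over $(0,1)$'' you invoke after the substitution $\tau\mapsto 1/\tau$ does not exist: the combined integrand
\[
\frac{(1+\tau)^{\gamma}+(1-\tau)^{\gamma}-(1+\tau)^{\beta}-(1-\tau)^{\beta}}{\tau^{1+2s}}
+(1+\tau)^{\gamma}\tau^{2s-1-\gamma}-(1+\tau)^{\beta}\tau^{2s-1-\beta}
\]
is negative, e.g., at $\tau=0.999$ for $s=0.6$, $\beta=0.1$, $\gamma=0.2$ (there $(1-\tau)^{\beta}\approx 0.5$ still dominates), so it has no definite sign. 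Your remaining ingredients (integrability of the integrand, and $l(s)=0$ via \cite[Lemma 2.4]{BV} or a Beta integral) are fine, but they do not give the lemma without the monotonicity step.

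What does work --- and is the paper's route --- is to treat each $\alpha$ separately by the very computation underlying your claim $l(s)=0$: integrate by parts in $\tau$ and change variables $\tau\mapsto\tau/(1+\tau)$ in the contribution of $(1-\tau)_+^{\alpha-1}$, which yields
\[
l(\alpha)=\frac{\alpha}{2s}\int_0^{+\infty}\frac{(1+\tau)^{\alpha-1}-(1+\tau)^{2s-\alpha-1}}{\tau^{2s}}\,d\tau .
\]
Since $1+\tau>1$, the integrand has the sign of $(\alpha-1)-(2s-\alpha-1)=2(\alpha-s)$, and the trichotomy \eqref{eq1lem1D} follows at once, with no comparison between different values of $\alpha$ needed. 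So the fix is to run the $\alpha=s$ manipulation for general $\alpha\in(0,2s)$ rather than to interpolate in $\alpha$.
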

\begin{proof}
Arguing exactly as in \cite[Lemma 2.4]{BV}, which concerns the case $\alpha=s$, we obtain that for any $\alpha\in(0,2s)$
$$
l(\alpha)=\frac{\alpha}{2s}\int\limits_0^{+\infty}\frac{(1+\tau)^{\alpha-1}-(1+\tau)^{2s-\alpha-1}}{\tau^{2s}}\,d\tau.
$$
Then the conclusion easily follows.
\end{proof}

Next lemma can be deduced by the arguments used in \cite[Section 3]{BGT}. Nevertheless, for the reader's convenience, we give a self contained proof of it.

\begin{lemma}\label{lem0}
Let
$$u(x)=g\left(|x-y|^2\right)\qquad x\in\mathbb R^N,$$
where $y\in\mathbb R^N$ is fixed and $g:[0,+\infty)\mapsto\R$ is a concave function. Then for any $x\in\R^N$ it holds that
\begin{equation}\label{2eq1}
\delta(u,x,\tau\xi)\leq\delta(u,x,\tau\xi^\perp)\qquad\forall\tau\geq0
\end{equation}
whenever $\xi^\perp$ is a unit vector which is orthogonal to $x-y$.
\end{lemma}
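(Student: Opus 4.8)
The plan is to reduce \eqref{2eq1} to a one–variable midpoint inequality for the concave function $g$. First I would translate so that $y=0$ and put $r=|x|$. Expanding the squared norms,
\begin{equation*}
|x\pm\tau\xi|^2=r^2+\tau^2\pm 2\tau\,x\cdot\xi=:a\pm b,\qquad a:=r^2+\tau^2,\quad b:=2\tau\,x\cdot\xi .
\end{equation*}
Since $a+b=|x+\tau\xi|^2\ge 0$ and $a-b=|x-\tau\xi|^2\ge 0$, both arguments lie in the domain $[0,+\infty)$ of $g$ (equivalently, by Cauchy--Schwarz $|b|\le 2\tau r$, whence $a\pm b\ge(r-\tau)^2\ge0$), so no positive-part truncation enters. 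Therefore
\begin{equation*}
\delta(u,x,\tau\xi)=g(a+b)+g(a-b)-2g(r^2).
\end{equation*}

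Now $a$ is the midpoint of $a+b$ and $a-b$, so concavity of $g$ (i.e.\ Jensen's inequality) gives $g(a+b)+g(a-b)\le 2g(a)$, and hence
\begin{equation*}
\delta(u,x,\tau\xi)\le 2g(a)-2g(r^2)=2g\!\left(r^2+\tau^2\right)-2g\!\left(r^2\right)\qquad\forall\tau\ge0.
\end{equation*}
On the other hand, if $\xi^\perp$ is a unit vector orthogonal to $x-y=x$, then $x\cdot\xi^\perp=0$, i.e.\ $b=0$ in the computation above, so the same identity yields $\delta(u,x,\tau\xi^\perp)=2g(r^2+\tau^2)-2g(r^2)$ \emph{exactly}, independently of the particular choice of $\xi^\perp$. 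Combining the last two displays proves \eqref{2eq1}.

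There is essentially no obstacle here; the only points deserving a line of care are (i) checking that the arguments $a\pm b$ of $g$ are nonnegative, so that $g$ is genuinely evaluated inside its domain, and (ii) observing that the right-hand side of \eqref{2eq1} does not depend on which $\xi^\perp\perp(x-y)$ is chosen. Both are immediate, the second because the level set $\{g(|x-y|^2+\tau^2)\}$ only records the distance $|x-y|$ and the step $\tau$.
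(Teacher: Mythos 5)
Your argument is correct and is essentially the paper's own proof: you expand $|x\pm\tau\xi|^2$ as a midpoint perturbation $a\pm b$ with $a=|x-y|^2+\tau^2$, apply the concavity (midpoint) inequality $g(a+b)+g(a-b)\le 2g(a)$, and observe that equality $\delta(u,x,\tau\xi^\perp)=2g(|x-y|^2+\tau^2)-2g(|x-y|^2)$ holds for any $\xi^\perp$ orthogonal to $x-y$. The only cosmetic difference is that the paper takes $b=2\tau\left|(x-y)\cdot\xi\right|$ to keep $b\ge0$, which your signed choice of $b$ handles equally well by symmetry.
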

\begin{proof}
For $x\in\R^N$ and $\xi\in\mathbb S^{N-1}$ we use the concavity inequality
$$
g(a+b)+g(a-b)\leq2g(a) \qquad\forall a\geq b\geq0
$$
with
$$
a=|x-y|^2+\tau^2\;,\quad b=2\tau\left|(x-y)\cdot\xi\right| 
$$ 
to infer that  for any $\tau\geq0$ one has
$$
g\left(|x-y+\tau\xi|^2\right)+g\left(|x-y-\tau\xi|^2\right)\leq 2g\left(|x-y|^2+\tau^2\right).
$$
Subtracting the quantity $2g(|x-y|^2)$ in both sides of the previous inequality we obtain that
\begin{equation}\label{2eq2}
\delta (u,x,\tau\xi)\leq 2g\left(|x-y|^2+\tau^2\right)-2g\left(|x-y|^2\right).
\end{equation}
On the other hand it is clear that 
\begin{equation}\label{2eq3}
2g\left(|x-y|^2+\tau^2\right)=u(x+\tau\xi^\perp)+u(x-\tau\xi^\perp)
\end{equation}
whenever $\xi^\perp\in\mathbb S^{N-1}$ is orthogonal to $x-y$. From \eqref{2eq2}-\eqref{2eq3} we conclude that \eqref{2eq1} holds.
\end{proof}

\section{Proof of the interior regularity}

Using the notation \eqref{not},
the operator \eqref{eq1} can be written as
$$
\I_\xi u(x)=C_s\int\limits_0^{+\infty}\frac{\delta(u,x,\tau\xi)}{\tau^{1+2s}}\,d\tau.
$$
Since the normalizing constant $C_s$ is not relevant for our arguments, henceforth  it will be omitted. 

\begin{proof}[Proof of Theorem \ref{th1}]
Let $\omega\subset\subset\omega'\subset\subset\Omega$ and pick $\varepsilon_0$ small enough such that 
$\omega_{2\varepsilon_0}=\left\{x\in\R^N:\,\text{dist}(x,\omega)<2\varepsilon_0\right\}\subset\subset\omega'$. For later purposes we can further assume that 
\begin{equation}\label{eq6}
\varepsilon_0\leq\frac{1}{\sqrt{1+2\left\|f\right\|_{L^\infty(\omega')}}}.
\end{equation}
Fix $z_0\in\omega$ and set 
$$
\phi(x,y)=u(x)-u(y)-M|x-y|^\alpha-L|x-z_0|^2,
$$
where 
\begin{equation}\label{eq7}
L=\frac{2\left\|u\right\|_{L^\infty(\R^N)}}{\varepsilon_0^2}.
\end{equation}
 We claim that for an appropriate choice of  $\alpha=\alpha(s)\in(0,1)$ and $M>0$, depending only on $\left\|u\right\|_{L^\infty(\R^N)}, \varepsilon_0,s$, then
\begin{equation*}
\sup_{(x,y)\in\R^N\times\R^N}\phi(x,y)\leq0,
\end{equation*}
which implies the desired result taking first $x=z_0$ and making $z_0$ vary in $\omega$. 

By contradiction we suppose that
\begin{equation}\label{eq8}
\sup_{(x,y)\in\R^N\times\R^N}\phi(x,y)>0.
\end{equation} By choosing $M$ sufficiently large, the supremum in \eqref{eq8} is in fact achieved at some point
$$
(\bar x,\bar y)\in\Delta_{z_0}:=\left\{(x,y)\in\Omega\times\Omega:\;|x-y|<\varepsilon_0,\;|x-z_0|<\varepsilon_0\right\}.
$$
Indeed, if $|x-z_0|\geq\varepsilon_0$ then for any $y\in\R^N$
$$
\phi(x,y)\leq2\left\|u\right\|_{L^\infty(\mathbb R^N)}-L\varepsilon_0^2=0,
$$
while if $|x-z_0|<\varepsilon_0$ and $|x-y|\geq\varepsilon_0$ one has
$$
\phi(x,y)\leq2\left\|u\right\|_{L^\infty(\mathbb R^N)}-M\varepsilon_0^\alpha\leq0,
$$
provided 
\begin{equation}\label{eq9}
M\geq\frac{2\left\|u\right\|_{L^\infty(\R^N)}}{\varepsilon_0^\alpha}.
\end{equation} 
Hence, by \eqref{eq8}, we have
\begin{equation}\label{eq10}
\max_{(x,y)\in\R^N\times\R^N}\phi(x,y)=\phi(\bar x,\bar y)>0,
\end{equation}
with 
\begin{equation}\label{eq11}
\bar x\neq\bar y\quad\text{and}\quad |\bar x-\bar y|<\varepsilon_0.
\end{equation}
Now we use the following two inequalities
\begin{equation*}
\begin{split}
\phi(x,\bar y)&\leq\phi(\bar x,\bar y)\quad\forall x\in\R^N\\
\phi(\bar x,y)&\leq\phi(\bar x,\bar y)\quad\forall y\in\R^N
\end{split}
\end{equation*}
to infer that for any $x\in\R^N$
\begin{equation}\label{eq12}
u(x)\leq\varphi_1(x):=u(\bar x)+M(|x-\bar y|^\alpha-|\bar x-\bar y|^\alpha)+L(|x-z_0|^2-|\bar x-z_0|^2) 
\end{equation}
 and that for any $y\in\R^N$
\begin{equation}\label{eq13}
u(y)\geq\varphi_2(y):=u(\bar y)+M|\bar x-\bar y|^\alpha-M|\bar x- y|^\alpha.
\end{equation}
Moreover $u(\bar x)=\varphi_1(\bar x)$ and $u(\bar y)=\varphi_2(\bar y)$.\\ Thus we can use $\varphi_1$ and $\varphi_2$ as test functions for $u$: if we let
$$
\psi_1(x)=
\begin{cases}
\varphi_1(x) & \text{if $|x-\bar x|\leq\frac12|\bar x-\bar y|$}\\
u(x) & \text{otherwise}
\end{cases}
$$
and
$$
\psi_2(y)=
\begin{cases}
\varphi_2(y) & \text{if $|y-\bar y|\leq\frac12|\bar x-\bar y|$}\\
u(x) & \text{otherwise},
\end{cases}
$$
then $\psi_1\in C^2\left(\overline{B_{\frac{|\bar x-\bar y|}{2}}(\bar x)}\right)$, $\psi_2\in C^2\left(\overline {B_{\frac{|\bar x-\bar y|}{2}}(\bar y)}\right)$  and 
\begin{equation}\label{eq14}
\begin{split}
f(\bar x)-f(\bar y)&\leq \I\psi_1(\bar x)-\I\psi_2(\bar y)\\&=\inf_{\xi\in\mathbb S^{N-1}}\I_\xi\psi_1(\bar x) -\sup_{\xi\in\mathbb S^{N-1}}\I_\xi\psi_2(\bar y)\\&\quad+\sup_{\xi\in\mathbb S^{N-1}}\I_\xi\psi_1(\bar x)-\inf_{\xi\in\mathbb S^{N-1}}\I_\xi\psi_2(\bar y).
\end{split}
\end{equation}
We now estimate, from above, the quantity $\inf_{\xi\in\mathbb S^{N-1}}\I_\xi\psi_1(\bar x) -\sup_{\xi\in\mathbb S^{N-1}}\I_\xi\psi_2(\bar y)$ by making the particular choice 
$$
\bar\xi=\frac{\bar x-\bar y}{|\bar x-\bar y|}.
$$
We have 
\begin{equation}\label{eq17}
\inf_{\xi\in\mathbb S^{N-1}}\I_\xi\psi_1(\bar x)\leq \I_{\bar\xi}\psi_1(\bar x)=\int\limits_0^{\frac{|\bar x-\bar y|}{2}}\frac{\delta(\varphi_1,\bar x,\tau\bar\xi)}{\tau^{1+2s}}\,d\tau+\int\limits_{\frac{|\bar x-\bar y|}{2}}^{+\infty}\frac{\delta(u,\bar x,\tau\bar\xi)}{\tau^{1+2s}}\,d\tau.
\end{equation}
By \eqref{eq12}, it holds that 
$$
\delta(u,\bar x,\tau\bar\xi)\leq\delta(\varphi_1,\bar x,\tau\bar\xi)\quad\forall\tau>0.
$$
Hence, for any $c\geq1/2$,
\begin{equation}\label{eq15}
\inf_{\xi\in\mathbb S^{N-1}}\I_\xi\psi_1(\bar x)\leq \I_{\bar\xi}\psi_1(\bar x)\leq\int\limits_0^{c{|\bar x-\bar y|}{}}\frac{\delta(\varphi_1,\bar x,\tau\bar\xi)}{\tau^{1+2s}}\,d\tau+\int\limits_{c{|\bar x-\bar y|}}^{+\infty}\frac{\delta(u,\bar x,\tau\bar\xi)}{\tau^{1+2s}}\,d\tau.
\end{equation}
Moreover, for any $\tau>0$,
\begin{equation}\label{eq16}
\delta(\varphi_1,\bar x,\tau\bar\xi)=M|\bar x-\bar y|^\alpha\left(\left|1+\frac{\tau}{|\bar x-\bar y|}\right|^\alpha+\left|1-\frac{\tau}{|\bar x-\bar y|}\right|^\alpha-2\right)+2L\tau^2.
\end{equation}
Combining \eqref{eq17}-\eqref{eq15}-\eqref{eq16} we get
 \begin{equation}\label{eq18}
\begin{split}
\inf_{\xi\in\mathbb S^{N-1}}\I_\xi\psi_1(\bar x)&\leq\frac{L}{1-s}(c|\bar x-\bar y|)^{2-2s}\\&\quad+M|\bar x-\bar y|^{\alpha-2s}\int\limits_0^c\frac{|1+\tau|^\alpha+|1-\tau|^\alpha-2}{\tau^{1+2s}}\,d\tau\\
&\quad+\int\limits_{c{|\bar x-\bar y|}}^{+\infty}\frac{\delta(u,\bar x,\tau\bar\xi)}{\tau^{1+2s}}\,d\tau.
\end{split}
\end{equation}
Similarly we have 
\begin{equation}\label{eq19}
\begin{split}
\sup_{\xi\in\mathbb S^{N-1}}\I_\xi\psi_2(\bar y)\geq\I_{\bar\xi}\psi_2(\bar y)&\geq-M|\bar x-\bar y|^{\alpha-2s}\int\limits_0^c\frac{|1+\tau|^\alpha+|1-\tau|^\alpha-2}{\tau^{1+2s}}\,d\tau\\
&\quad+\int\limits_{c{|\bar x-\bar y|}}^{+\infty}\frac{\delta(u,\bar y,\tau\bar\xi)}{\tau^{1+2s}}\,d\tau.
\end{split}
\end{equation}
Now if we subtract \eqref{eq18} and \eqref{eq19} we get

\begin{equation}\label{eq19'}
\begin{split}
\inf_{\xi\in\mathbb S^{N-1}}\I_\xi\psi_1(\bar x)&-\sup_{\xi\in\mathbb S^{N-1}}\I_\xi\psi_2(\bar y)\leq\frac{L}{1-s}(c|\bar x-\bar y|)^{2-2s}\\&\quad+2M|\bar x-\bar y|^{\alpha-2s}\int\limits_0^c\frac{|1+\tau|^\alpha+|1-\tau|^\alpha-2}{\tau^{1+2s}}\,d\tau\\
&\quad+\int\limits_{c{|\bar x-\bar y|}}^{+\infty}\frac{\delta(u,\bar x,\tau\bar\xi)-\delta(u,\bar y,\tau\bar\xi)}{\tau^{1+2s}}\,d\tau.
\end{split}
\end{equation}
Now we use the inequality
\begin{equation*}
\phi(\bar x\pm\tau\bar\xi,\bar y\pm\tau\bar\xi)\leq \phi(\bar x,\bar y)\quad \forall \tau>0,
\end{equation*}
which is a consequence of \eqref{eq10}, to infer that 
\begin{equation*}
\delta(u,\bar x,\tau\bar\xi)-\delta(u,\bar y,\tau\bar\xi)\leq2L\tau^2.
\end{equation*}
Hence
\begin{equation}\label{eq20}
\begin{split}
\int\limits_{c{|\bar x-\bar y|}}^{+\infty}\frac{\delta(u,\bar x,\tau\bar\xi)-\delta(u,\bar y,\tau\bar\xi)}{\tau^{1+2s}}&\,d\tau\leq2L\int_{c{|\bar x-\bar y|}}^1\tau^{1-2s}\,d\tau\\&\quad+\int_{1}^{+\infty}\frac{\delta(u,\bar x,\tau\bar\xi)-\delta(u,\bar y,\tau\bar\xi)}{\tau^{1+2s}}\,d\tau\\
&\leq\frac{L}{1-s}\left(1-(c|\bar x-\bar y|)^{2-2s}\right)+\frac4s\left\|u\right\|_{L^\infty(\R^N)}.
\end{split}
\end{equation}
The inequalities \eqref{eq19'}-\eqref{eq20} yield
\begin{equation}\label{eq21}
\begin{split}
\inf_{\xi\in\mathbb S^{N-1}}\I_\xi\psi_1(\bar x)&-\sup_{\xi\in\mathbb S^{N-1}}\I_\xi\psi_2(\bar y)\leq\frac{L}{1-s}+\frac4s\left\|u\right\|_{L^\infty(\R^N)}\\&\quad+2M|\bar x-\bar y|^{\alpha-2s}\int\limits_0^c\frac{|1+\tau|^\alpha+|1-\tau|^\alpha-2}{\tau^{1+2s}}\,d\tau.
\end{split}
\end{equation}
Now we estimate, from above, the quantity $\sup_{\xi\in\mathbb S^{N-1}}\I_\xi\psi_1(\bar x) -\inf_{\xi\in\mathbb S^{N-1}}\I_\xi\psi_2(\bar y)$ as follows:
\begin{equation*}
\begin{split}
\sup_{\xi\in\mathbb S^{N-1}}\I_\xi\psi_1(\bar x)&=\sup_{\xi\in\mathbb S^{N-1}}\int\limits_0^{+\infty}\frac{\delta(\psi_1,\bar x,\tau\xi)}{\tau^{1+2s}}\,d\tau\\
&\leq\sup_{\xi\in\mathbb S^{N-1}}\int\limits_0^{1}\frac{\delta(\psi_1,\bar x,\tau\xi)}{\tau^{1+2s}}\,d\tau+\sup_{\xi\in\mathbb S^{N-1}}\int\limits_{1}^{+\infty}\frac{\delta(\psi_1,\bar x,\tau\xi)}{\tau^{1+2s}}\,d\tau\\
&\leq\sup_{\xi\in\mathbb S^{N-1}}\int\limits_0^{1}\frac{\delta(\psi_1,\bar x,\tau\xi)}{\tau^{1+2s}}\,d\tau+\frac2s\left\|u\right\|_{L^\infty(\R^N)}\\
&\leq M\sup_{\xi\in\mathbb S^{N-1}}\int\limits_0^{1}\frac{\delta(\Gamma,\bar x,\tau\xi)}{\tau^{1+2s}}\,d\tau+\frac{L}{1-s}+\frac2s\left\|u\right\|_{L^\infty(\R^N)},
\end{split}
\end{equation*}
where $\Gamma(x)=|x-\bar y|^\alpha$. From  Lemma \ref{lem0}, applied to the function $\Gamma$ with $g(r)=r^\frac\alpha2$,  we also know that 
$$
\delta(\Gamma,\bar x,\tau\xi)\leq\delta(\Gamma,\bar x,\tau\xi^\perp) \quad\forall\tau>0,
$$
where $\xi^\perp$ is any unit vector which is orthogonal to $\bar x-\bar y$. Hence
\begin{equation}\label{eq22}
\begin{split}
\sup_{\xi\in\mathbb S^{N-1}}\I_\xi\psi_1(\bar x)
&\leq\frac2s\left\|u\right\|_{L^\infty(\R^N)}+\frac{L}{1-s}+M\int\limits_0^{1}\frac{\delta(\Gamma,\bar x,\tau\xi^\perp)}{\tau^{1+2s}}\,d\tau\\
&=\frac2s\left\|u\right\|_{L^\infty(\R^N)}+\frac{L}{1-s}+2M|\bar x-\bar y|^{\alpha-2s}\int\limits_0^{\frac{1}{|\bar x-\bar y|}}\frac{\left(1+\tau^2\right)^{\frac\alpha2}-1}{\tau^{1+2s}}\,d\tau\\
&\leq \frac2s\left\|u\right\|_{L^\infty(\R^N)}+\frac{L}{1-s}+2M|\bar x-\bar y|^{\alpha-2s}\int\limits_0^{+\infty}\frac{\left(1+\tau^2\right)^{\frac\alpha2}-1}{\tau^{1+2s}}\,d\tau.
\end{split}
\end{equation}
In a similar way we also obtain
\begin{equation}\label{eq23}
\inf_{\xi\in\mathbb S^{N-1}}\I_\xi\psi_2(\bar y)\geq-\frac2s\left\|u\right\|_{L^\infty(\R^N)}-2M|\bar x-\bar y|^{\alpha-2s}\int\limits_0^{+\infty}\frac{\left(1+\tau^2\right)^{\frac\alpha2}-1}{\tau^{1+2s}}\,d\tau.
\end{equation}
Putting together \eqref{eq22}-\eqref{eq23} we have
\begin{equation}\label{eq24}
\begin{split}
\sup_{\xi\in\mathbb S^{N-1}}\I_\xi\psi_1(\bar x)&-\inf_{\xi\in\mathbb S^{N-1}}\I_\xi\psi_2(\bar y)\leq\frac{L}{1-s}+\frac4s\left\|u\right\|_{L^\infty(\R^N)}\\&\quad+4M|\bar x-\bar y|^{\alpha-2s}\int\limits_0^{+\infty}\frac{\left(1+\tau^2\right)^{\frac\alpha2}-1}{\tau^{1+2s}}\,d\tau.
\end{split}
\end{equation}
Using \eqref{eq14},\eqref{eq21} and \eqref{eq24} in we arrive at 
\begin{equation}\label{eq25}
-2\left\|f\right\|_{L^\infty(\omega')}\leq\frac{2L}{1-s}+\frac8s\left\|u\right\|_{L^\infty(\R^N)}+2M|\bar x-\bar y|^{\alpha-2s}g_c(\alpha)
\end{equation}
where 
$$
g_c(\alpha)=\int\limits_0^c\frac{|1+\tau|^\alpha+|1-\tau|^\alpha-2}{\tau^{1+2s}}\,d\tau+2\int\limits_0^{+\infty}\frac{\left(1+\tau^2\right)^{\frac\alpha2}-1}{\tau^{1+2s}}\,d\tau.
$$
The goal is to select $c$ and $\alpha$ such that 
\begin{equation}\label{eq26}
g_c(\alpha)<0.
\end{equation}
Note that the latter integral in the definition of the function $g_c(\alpha)$ is always positive (and well defined) for any $\alpha\in(0,2s)$, while the first integral is negative, for any $c>0$, provided $\alpha\in(0,2s-1]$, see \cite[Lemma B.1]{barlesCGJ}-\cite[Lemma 3.6]{BGT}.\\
Once \eqref{eq26} is proved, using \eqref{eq6}-\eqref{eq7}-\eqref{eq11}-\eqref{eq25} we obtain
\begin{equation}\label{eq27}
\begin{split}
-2\left\|f\right\|_{L^\infty(\omega')}&\leq\frac{4\left\|u\right\|_{L^\infty(\R^N)}}{\varepsilon_0^2(1-s)}+\frac8s\left\|u\right\|_{L^\infty(\R^N)}+2M\varepsilon_0^{\alpha-2s}g_c(\alpha)\\
&\leq\frac{1}{\varepsilon_0^2}\left(4\frac{2-s}{s(1-s)}\left\|u\right\|_{L^\infty(\R^N)}+2M\varepsilon_0^{\alpha+2(1-s)}g_c(\alpha)\right).
\end{split}
\end{equation}
Choosing $M$ in such a way, in addition to the condition \eqref{eq9}, it holds that 
$$
4\frac{2-s}{s(1-s)}\left\|u\right\|_{L^\infty(\R^N)}+2M\varepsilon_0^{\alpha+2(1-s)}g_c(\alpha)\leq-1,
$$
then from \eqref{eq27} we arrive at
$$
-2\left\|f\right\|_{L^\infty(\omega')}\leq-\frac{1}{\varepsilon_0^2}
$$
which is a contradiction to the definition of $\varepsilon_0$, see \eqref{eq6}. 

\smallskip

It remains to prove \eqref{eq26}. For this we first note that
\begin{equation}\label{eq28}
g_c(1)=2\int\limits_0^{+\infty}\frac{\sqrt{1+\tau^2}-1}{\tau^{1+2s}}\,d\tau>0
\end{equation}
and that, by dominated convergence theorem, 
\begin{equation}\label{eq29}
g_c(0):=\lim_{\alpha\to0^+}g_c(\alpha)=0.
\end{equation}
Moreover $g_c'(\alpha)$ exists for any $\alpha\in(0,2s)$ and
\begin{equation}\label{eq30}
\begin{split}
g_c'(\alpha)&=\int\limits_0^c\frac{|1+\tau|^\alpha\log|1+\tau|+|1-\tau|^\alpha\log|1-\tau|}{\tau^{1+2s}}\,d\tau\\&\quad+\int\limits_0^{+\infty}\frac{(1+\tau^2)^\frac{\alpha}{2}\log(1+\tau^2)}{\tau^{1+2s}}\,d\tau.
\end{split}
\end{equation}
Indeed 
\begin{equation*}
\begin{split}
\left|\frac{\partial}{\partial \alpha}\frac{|1+\tau|^\alpha+|1-\tau|^\alpha-2}{\tau^{1+2s}}\right|&=\left|\frac{|1+\tau|^\alpha\log|1+\tau|+|1-\tau|^\alpha\log|1-\tau|}{\tau^{1+2s}}\right|\\&\leq \frac{K}{\tau^{2s-1}}\in L^1(0,c),
\end{split}
\end{equation*}
for a positive constant $K$ independent of $\alpha$, and if $\alpha<2s-\varepsilon$ for some $\varepsilon>0$,
 \begin{equation*}
\begin{split}
\left|\frac{\partial}{\partial \alpha}\frac{\left(1+\tau^2\right)^{\frac\alpha2}-1}{\tau^{1+2s}}\right|&=
\left|\frac{(1+\tau^2)^\frac{\alpha}{2}\log(1+\tau^2)}{\tau^{1+2s}}\right|\\&\leq\frac{\left(1+\tau^2\right)^{\frac{2s-\varepsilon}{2}}\log(1+\tau^2)}{\tau^{1+2s}}\in L^1(0,+\infty).
\end{split}
\end{equation*}
Hence, it is possible to differentiate $g_c(\alpha)$ under the integral sign to obtain \eqref{eq30}. Moreover, again by dominated convergence theorem,
\begin{equation}\label{eq31}
(g_c)'_+(0)=\lim_{\alpha\to0^+}g_c'(\alpha)=\int\limits_0^c\frac{\log|1-\tau^2|}{\tau^{1+2s}}\,d\tau+\int\limits_0^{+\infty}\frac{\log(1+\tau^2)}{\tau^{1+2s}}\,d\tau.
\end{equation}
Arguing as above, it is also possible to differentiate twice $g_c(\alpha)$ under the integral sign, to discover that for any $\alpha\in(0,2s)$ 
\begin{equation*}
\begin{split}
g_c''(\alpha)&=\int\limits_0^c\frac{|1+\tau|^\alpha\log^2|1+\tau|+|1-\tau|^\alpha\log^2|1-\tau|}{\tau^{1+2s}}\,d\tau\\&\quad+\frac12\int\limits_0^{+\infty}\frac{(1+\tau^2)^\frac{\alpha}{2}\log^2(1+\tau^2)}{\tau^{1+2s}}\,d\tau.
\end{split}
\end{equation*}
Since $g_c''(\alpha)>0$ for any $\alpha\in(0,2s)$, then $g_c(\alpha)$ is  strictly convex. Taking into account of \eqref{eq28}-\eqref{eq29}, then $g_c(\alpha)$ will be negative in a right neighborhood of $0$, provided $(g_c)_+'(0)<0$. Here is the point where we select the parameter $c$ in a suitable way. Using \eqref{eq31} we have 
\begin{equation*}
(g_c)_+'(0)=\frac12\left(\int\limits_0^{c^2}\frac{\log|1-\tau^2|}{\tau^{1+s}}\,d\tau+\int\limits_{c^2}^{+\infty}\frac{\log(1+\tau)}{\tau^{1+s}}\,d\tau\right)=\frac12 h_{c^2}(s),
\end{equation*}
where  $h_c(s)$ is the function defined by \eqref{hc}. In view of Lemma \ref{lemma}, for any positive $c$ such that $c^2\in(1,2])$, i.e. for any $c\in(1,\sqrt{2}]$, then 
$$
(g_c)_+'(0)<0
$$
and  $g_c(\alpha)<0$ for any $\alpha\in(0,\alpha^*)$, where $\alpha^*=\alpha^*(c,s)>0$ is such that $g_c(\alpha^*)=0$. 
\end{proof}

\section{Proof of the global regularity}

We start with the following proposition which will be used in Proposition \ref{probarr} to obtain estimates of  solutions of \eqref{eqDir2} in terms of powers of the distance function $d(x)$ defined in \eqref{dist} . The proof follows somehow the one given in \cite[Lemma 3.3]{QSX} for the nonlocal Pucci's operators.
\begin{proposition}\label{propdist}
Let $\Omega$ be a $C^2$-bounded domain and let $\alpha\in(0,s)$. Then there exists $\mu$ and $m$ positive constants such that 
\begin{equation}\label{eq1gr}
\I d^\alpha(x)\leq -md^{\alpha-2s}(x)
\end{equation}
for any $x\in \Omega_\mu=\left\{x\in\Omega:\;d(x)<\mu\right\}$.
\end{proposition}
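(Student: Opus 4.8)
The plan is to prove \eqref{eq1gr} on a sufficiently thin tubular neighbourhood of $\partial\Omega$, by comparing $d^\alpha$ direction by direction with the model half-space barrier, for which $\I_\xi$ can be computed explicitly through Lemma~\ref{lem1D}. As in the proof of Lemma~\ref{lembar}, let $\tilde d$ be the signed distance to $\partial\Omega$ (positive inside $\Omega$), so that $d=\tilde d_+$ and, $\partial\Omega$ being $C^2$, there is $\mu_0>0$ with $\tilde d\in C^2(\{|\tilde d|<\mu_0\})$, $|\nabla\tilde d|\equiv1$ there, and — $\tilde d$ being affine along each normal segment — $\tilde d(x+\tau\nabla\tilde d(x))=\tilde d(x)+\tau$ whenever $x$ and $x+\tau\nabla\tilde d(x)$ both lie in the tube. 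I fix $\mu\le\mu_0/2$, to be chosen small, and take any $x\in\Omega_\mu$, so that $t:=d(x)\in(0,\mu_0/2)$ and $d^\alpha$ is $C^2$ near $x$ (hence $\I d^\alpha(x)$ is classically defined).

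First I would record the model identity: for $t>0$ and $\beta\in[-1,1]$, the change of variable $\tau=t\sigma/|\beta|$ together with Lemma~\ref{lem1D} (recall $\alpha<s$) gives
\[
\int_0^{+\infty}\frac{(t+\tau\beta)_+^\alpha+(t-\tau\beta)_+^\alpha-2t^\alpha}{\tau^{1+2s}}\,d\tau=l(\alpha)\,|\beta|^{2s}\,t^{\alpha-2s}\le0 .
\]
For each $\xi\in\mathbb S^{N-1}$ I split $\I_\xi d^\alpha(x)=\int_0^{\mu_0/2}+\int_{\mu_0/2}^{+\infty}$. Since $\Omega$ is bounded, $\|d^\alpha\|_{L^\infty(\R^N)}<+\infty$, so the outer integral is bounded, uniformly in $x$ and $\xi$, by a constant $C_0$ depending only on $\mu_0,s,\alpha,\Omega$. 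For $\tau<\mu_0/2$ the points $x\pm\tau\xi$ lie in the tube, so writing $\beta:=\nabla\tilde d(x)\cdot\xi$ the second-order Taylor expansion of $\tilde d$ yields $\tilde d(x\pm\tau\xi)=t\pm\tau\beta+R_\pm(\tau)$ with $|R_\pm(\tau)|\le\kappa\tau^2$, $\kappa:=\tfrac12\|D^2\tilde d\|_{L^\infty(\{|\tilde d|<\mu_0\})}$; since $r\mapsto r_+^\alpha$ is nondecreasing,
\[
\delta(d^\alpha,x,\tau\xi)\le(t+\tau\beta+\kappa\tau^2)_+^\alpha+(t-\tau\beta+\kappa\tau^2)_+^\alpha-2t^\alpha .
\]

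Next I would compare the right-hand side with the half-space integrand $(t\pm\tau\beta)_+^\alpha-t^\alpha$: the difference is nonnegative and is controlled by splitting the $\tau$-integral at $\tau=t/2$, using the concavity bound $(a+\kappa\tau^2)^\alpha-a^\alpha\le\alpha\,a^{\alpha-1}\kappa\tau^2$ (valid on $(0,t/2)$, where $t\pm\tau\beta\ge t/2$) and the $\alpha$-H\"older bound $(a+\kappa\tau^2)_+^\alpha-a_+^\alpha\le(\kappa\tau^2)^\alpha$ on $(t/2,\mu_0/2)$. The first piece integrates to a multiple of $t^{\alpha+1-2s}$, the second to a multiple of $t^{2\alpha-2s}$ (the exponent $2\alpha-1-2s$ being $<-1$, so the integral is controlled by its lower endpoint); using $t<\mu$ both are $\le C_1(\mu+\mu^\alpha)\,t^{\alpha-2s}$ with $C_1$ independent of $\mu$. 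Combining with the model identity (and the fact that the remaining $\int_{\mu_0/2}^{+\infty}$ of the half-space integrand is also a $\mu$-independent bounded quantity), this gives $\I_\xi d^\alpha(x)\le l(\alpha)|\beta|^{2s}t^{\alpha-2s}+C_1(\mu+\mu^\alpha)t^{\alpha-2s}+C_0$, and since $l(\alpha)|\beta|^{2s}\le0$, taking $\sup_\xi$ yields $\sup_{\xi}\I_\xi d^\alpha(x)\le C_1(\mu+\mu^\alpha)t^{\alpha-2s}+C_0$. For the infimum I would simply choose $\xi=\nabla\tilde d(x)$: then $\beta=1$ and there is no quadratic error, so $\delta(d^\alpha,x,\tau\nabla\tilde d(x))=(t+\tau)^\alpha+(t-\tau)_+^\alpha-2t^\alpha$ for $\tau<\mu_0/2$, and since after rescaling the discarded tail $\int_{\mu_0/(2t)}^{+\infty}\frac{(1+\sigma)^\alpha+(1-\sigma)_+^\alpha-2}{\sigma^{1+2s}}\,d\sigma$ is nonnegative once $t$ is small, $\inf_{\xi}\I_\xi d^\alpha(x)\le\I_{\nabla\tilde d(x)}d^\alpha(x)\le l(\alpha)t^{\alpha-2s}+C_0$. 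Adding the two estimates, $\I d^\alpha(x)\le\big(l(\alpha)+C_1(\mu+\mu^\alpha)\big)t^{\alpha-2s}+2C_0$; since $l(\alpha)<0$ I first fix $\mu$ small enough that $C_1(\mu+\mu^\alpha)\le-\tfrac14l(\alpha)$, and then shrink $\mu$ further so that $2C_0\le-\tfrac18l(\alpha)\,t^{\alpha-2s}$ for every $x\in\Omega_\mu$ (possible since $\alpha-2s<0$, so $t^{\alpha-2s}\ge\mu^{\alpha-2s}\to+\infty$). This yields \eqref{eq1gr} with $m=-\tfrac18l(\alpha)>0$.

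The main difficulty is the control of $\sup_\xi\I_\xi d^\alpha(x)$ for directions $\xi$ that are (nearly) tangential to $\partial\Omega$ at the foot of $x$: there $\tau\beta$ is small, the curvature term $\kappa\tau^2$ cannot be dropped, and a crude $\alpha$-H\"older estimate already produces a non-integrable singularity at $\tau=0$ because $\alpha<s$. This is precisely why the $\tau$-integral must be split at the scale $\tau\sim d(x)$: below that scale $t\pm\tau\beta$ stays comparable to $t$ and the sharper first-order bound applies, while above it the H\"older bound is integrable, and after integration both contributions are $O(t^{\alpha+1-2s})+O(t^{2\alpha-2s})=o(t^{\alpha-2s})$ as $t\to0$. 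Everything else is the by-now standard barrier machinery, parallel to \cite[Lemma~3.3]{QSX} for the nonlocal Pucci operators.
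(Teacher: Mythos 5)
Your argument is correct, and it takes a genuinely different route from the paper. The paper proves \eqref{eq1gr} by contradiction and compactness: assuming the estimate fails along a sequence $x_n\to\bar x\in\partial\Omega$, it picks almost-maximizing directions $\xi_n\to\bar\xi$, rescales by $d(x_n)$, and uses Lemma \ref{lembar} together with dominated convergence to identify the limits of $d^{2s-\alpha}(x_n)\I_{\xi_n}d^\alpha(x_n)$ and of $d^{2s-\alpha}(x_n)\I_{\nu(\bar x)}d^\alpha(x_n)$ as $|\nu(\bar x)\cdot\bar\xi|^{2s}\,l(\alpha)$ and $l(\alpha)$ respectively, so the contradiction comes from $\bigl(1+|\nu(\bar x)\cdot\bar\xi|^{2s}\bigr)l(\alpha)<0$ via Lemma \ref{lem1D}. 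You instead run a direct, quantitative computation: Taylor expansion of the signed distance with curvature error $\kappa\tau^2$, comparison with the exact half-space profile (again reduced to $l(\alpha)$ by scaling), and the splitting of the $\tau$-integral at the scales $\mu_0/2$ and $d(x)/2$, which correctly shows the error terms are $O\bigl(t^{\alpha+1-2s}\bigr)+O\bigl(t^{2\alpha-2s}\bigr)=o\bigl(t^{\alpha-2s}\bigr)$; note that for the supremum you only use $l(\alpha)|\beta|^{2s}\le0$ and extract the full negativity from the normal direction in the infimum, whereas the paper keeps both contributions (either suffices). What your approach buys is effectiveness: it avoids Lemma \ref{lembar} and the sequential argument entirely and yields explicit $\mu$ and $m$ in terms of $l(\alpha)$, $\|D^2\tilde d\|_{L^\infty}$ and the tube width $\mu_0$ (hence in principle uniform over families of domains or of exponents), at the price of the more delicate bookkeeping of the curvature and tail errors; the paper's blow-up argument is softer and shorter, closer in spirit to \cite[Lemma 3.3]{QSX}, but purely qualitative in the constants. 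The only points worth making fully explicit in a written version are the standard facts you invoke about the tubular neighbourhood of a $C^2$ boundary: that $\tilde d\in C^2$ with $|\nabla\tilde d|=1$ there, and that $\tilde d\bigl(x\pm\tau\nabla\tilde d(x)\bigr)=\tilde d(x)\pm\tau$ as long as the points stay within the reach, which is what makes the normal-direction computation exact.
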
  
\begin{proof}
Suppose not, then we can find a sequence $\left\{x_n\right\}_n$ of points of $\Omega$ such that 
\begin{equation}\label{eq2gr}
\lim_{n\to\infty} x_n=\bar x\in\partial\Omega
\end{equation}
and
\begin{equation}\label{eq3gr}
\lim_{n\to\infty} d^{2s-\alpha}(x_n)\I d^\alpha(x_n)\geq0.
\end{equation} 
Let $\left\{\xi_n\right\}_n$ be a sequence of unit vectors such that 
$$
\sup_{\xi\in\mathbb S^{N-1}}\I_{\xi_n} d^\alpha(x_n)\leq \I_{\xi_n}d^\alpha(x_n)+\frac1n.
$$
Up to a subsequence we may suppose that $\xi_n\to\bar\xi\in\mathbb S^{N-1}$ as $n\to\infty$ and, from \eqref{eq3gr}, that
 \begin{equation}\label{eq4gr}
\lim_{n\to\infty} d^{2s-\alpha}(x_n)\left(\I_{\xi_n} d^\alpha(x_n)+\inf_{\xi\in\mathbb S^{N-1}}\I_{\xi} d^\alpha(x_n)\right)\geq0.
\end{equation} 
We first compute the limit 
$$
\lim_{n\to\infty} d^{2s-\alpha}(x_n)\I_{\xi_n} d^\alpha(x_n).
$$
With the change of variables $\tau=d(x_n)t$ we can write
$$
d^{2s-\alpha}(x_n)\I_{\xi_n} d^\alpha(x_n)=\int\limits_0^{+\infty}\frac{g_n(t)}{t^{1+2s}}\,dt
$$
where 
$$
g_n(t)={\left(\frac{d(x_n+d(x_n)t\xi_n)}{d(x_n)}\right)}^\alpha+{\left(\frac{d(x_n-d(x_n)t\xi_n)}{d(x_n)}\right)}^\alpha-2.
$$
The aim is to pass to the limit under the integral sign. Using Lemma \ref{lembar} and in particular \eqref{eq1lemma}, we infer that for $t$ small, say $t\in(0,\frac14)$ it holds that 
\begin{equation}\label{eq5gr}
\frac{d(x_n\pm d(x_n)t\xi_n)}{d(x_n)}\geq\frac12
\end{equation} 
provided $n$ is sufficiently large. Moreover, by the Lipschitz continuity of the distance function, we have for $t\in(0,\frac14)$ that 
\begin{equation}\label{eq6gr}
\frac{d(x_n\pm d(x_n)t\xi_n)}{d(x_n)}\leq1+t\leq \frac54
\end{equation} 
In view of \eqref{eq5gr}-\eqref{eq6gr} and using the fact that the  distance function $d(x)$ has bounded second derivatives for $x$ sufficiently close to $\partial\Omega$, we obtain that $g_n(t)$ is twice differentiable for $t\in(0,\frac14)$, in fact 
\begin{equation}\label{eq7gr}
|g_n(t)|\leq Ct^2
\end{equation}
for some positive constant $C$ independent on $n$.\\
On the other hand, again by the Lipschitz continuity of $d(x)$, we have
$$
\left|d^\alpha(x_n\pm d(x_n)t\xi_n)-d^\alpha(x_n)\right|\leq{\left|d(x_n\pm d(x_n)t\xi_n)-d(x_n)\right|}^\alpha\leq t^\alpha d^\alpha(x_n)
$$
and then, for $t\geq\frac14$ 
\begin{equation}\label{eq8gr}
|g_n(t)|\leq 2t^\alpha.
\end{equation}
Using \eqref{eq7gr}-\eqref{eq8gr} we infer that 
$$
\frac{|g_n(t)|}{t^{1+2s}}\leq\begin{cases}
C t^{1-2s} & \text{if $t\in(0,\frac14)$}\\
2t^{\alpha-1-2s} & \text{if $t\geq\frac14$}
\end{cases}\in L^1(0,+\infty).
$$
Since by \eqref{eq1lemma}
$$
\lim_{n\to\infty} g_n(t)={\left(1+t \nu(\bar x)\cdot\bar\xi\right)}^\alpha_++{\left(1-t \nu(\bar x)\cdot\bar\xi\right)}^\alpha_+-2
$$
pointwise in $(0,+\infty)$, we can use Lebesgue's dominated convergence theorem to obtain that
 \begin{equation}\label{eq9gr}
\begin{split}
\lim_{n\to\infty} d^{2s-\alpha}(x_n)\I_{\xi_n} d^\alpha(x_n)&=\int\limits_0^{+\infty}\frac{{\left(1+t \nu(\bar x)\cdot\bar\xi\right)}^\alpha_++{\left(1-t \nu(\bar x)\cdot\bar\xi\right)}^\alpha_+-2}{t^{1+2s}}\,dt\\
&=|\nu(\bar x)\cdot\bar\xi|^{2s}\int\limits_0^{+\infty}\frac{{\left(1+\tau\right)}^\alpha+{\left(1-\tau \right)}^\alpha_+-2}{\tau^{1+2s}}\,d\tau.
\end{split}
\end{equation}
We now estimate from above the limit
$$
\lim_{n\to\infty} d^{2s-\alpha}(x_n)\inf_{\xi\in\mathbb S^{N-1}}\I_{\xi} d^\alpha(x_n).
$$
By making the particular choice $\xi=\nu(\bar x)$, we have 
$$
\inf_{\xi\in\mathbb S^{N-1}}\I_{\xi} d^\alpha(x_n)\leq\I_{\nu(\bar x)}d^\alpha(x_n).
$$
Moreover, arguing as above, we also have 
\begin{equation}\label{eq10gr}
\lim_{n\to\infty} d^{2s-\alpha}(x_n)\I_{\nu(\bar x)} d^\alpha(x_n)=\int\limits_0^{+\infty}\frac{{\left(1+\tau\right)}^\alpha+{\left(1-\tau \right)}^\alpha_+-2}{\tau^{1+2s}}\,d\tau.
\end{equation}
From \eqref{eq9gr}-\eqref{eq10gr} and \eqref{eql} we infer that
\begin{equation*}
\lim_{n\to\infty} d^{2s-\alpha}(x_n)\left(\I_{\xi_n} d^\alpha(x_n)+\I_{\nu(\bar x)} d^\alpha(x_n)\right)=\left(|\nu(\bar x)\cdot\bar\xi|^{2s}+1\right)l(\alpha)<0,
\end{equation*}
since $\alpha<s$ (Lemma \ref{lem1D}). This contradicts \eqref{eq4gr} and the proof is then complete.
\end{proof}

\begin{proposition}\label{probarr}
Let $u\in C(\mathbb R^N)$ be a viscosity solution \eqref{eqDir2} with $f\in C(\Omega)\cap L^\infty(\Omega)$. Then for $\alpha<s$ there exists a positive constant $C$ such that 
\begin{equation}\label{globalestimate}
|u(x)|\leq Cd^\alpha(x)
\end{equation}
for any $x\in\Omega$.
\end{proposition}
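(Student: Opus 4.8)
The plan is to produce a smooth, strict supersolution out of a power of the distance function and to compare $u$ against it directly; Proposition~\ref{propdist} carries all the quantitative weight near $\partial\Omega$, and away from $\partial\Omega$ the estimate is trivial because $u$ is bounded. \emph{Choice of the barrier.} Since $\Omega$ is a $C^2$-bounded domain, fix $\mu>0$ so small that $d\in C^2(\Omega_\mu)$ and that Proposition~\ref{propdist} applies on $\Omega_\mu$, providing $m>0$ with $\I d^\alpha\le -m\,d^{\alpha-2s}$ there. Set $w:=C\,d^\alpha$ on $\R^N$ (so $w\equiv 0$ on $\R^N\setminus\Omega$), where $C>0$ is chosen large enough that
$$
C\geq \mu^{-\alpha}\|u\|_{L^\infty(\R^N)}\qquad\text{and}\qquad C>\frac{\mu^{2s-\alpha}}{m}\,\|f\|_{L^\infty(\Omega)}.
$$
Because $\alpha-2s<-s<0$, the map $t\mapsto t^{\alpha-2s}$ is decreasing, so $d(x)^{\alpha-2s}\geq\mu^{\alpha-2s}$ for $x\in\Omega_\mu$; since each $\I_\xi$ is linear and $\inf,\sup$ are positively homogeneous, $\I w(x)=C\,\I d^\alpha(x)$, and hence
$$
\I w(x)\leq -Cm\,d(x)^{\alpha-2s}\leq -Cm\,\mu^{\alpha-2s}<-\|f\|_{L^\infty(\Omega)}\qquad\text{for all }x\in\Omega_\mu .
$$
Here $\I d^\alpha(x)$ is a genuine real number for $x\in\Omega_\mu$, as $d^\alpha$ is $C^2$ near $x$ and globally bounded on $\R^N$.

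\emph{The bound $u\leq w$.} It suffices to check $u\leq w$ on $\Omega_\mu$: for $x\in\Omega$ with $d(x)\geq\mu$ one has $u(x)\leq\|u\|_{L^\infty(\R^N)}\leq C\mu^\alpha\leq C d(x)^\alpha=w(x)$, while $u=w=0$ on $\R^N\setminus\Omega$; in particular $u-w\leq 0$ on $\R^N\setminus\Omega_\mu$. Suppose for contradiction that $c:=\sup_{\Omega_\mu}(u-w)>0$. Since $u\in C(\R^N)$, $\overline{\Omega_\mu}$ is compact, and $u-w\leq 0$ on $\partial\Omega_\mu\subset\partial\Omega\cup\{d=\mu\}$, the value $c$ is attained at some $x_0\in\Omega_\mu$, and moreover $u-w\leq c$ on all of $\R^N$. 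Then $\varphi:=w+c$ is $C^2$ in a ball $B_r(x_0)\subset\Omega_\mu$, satisfies $\varphi(x_0)=u(x_0)$ and $\varphi\geq u$ on $\R^N$, so it is an admissible test function from above for the subsolution $u$ at $x_0$. In the viscosity inequality for $\I u=f$ the inner part of each one-dimensional integral is computed with $\varphi$, so $\delta(\varphi,x_0,\tau\xi)=\delta(w,x_0,\tau\xi)$ (constants cancel, see \eqref{not}), while the tail is computed with $u$, and $u-w\leq c$ gives $\delta(u,x_0,\tau\xi)\leq\delta(w,x_0,\tau\xi)$ for all $\tau>0$, $\xi\in\mathbb S^{N-1}$; therefore the test-function value is $\leq\I w(x_0)$ (using monotonicity of $\inf$ and $\sup$), whence
$$
f(x_0)\leq \I w(x_0)\leq -Cm\,\mu^{\alpha-2s}<-\|f\|_{L^\infty(\Omega)}\leq f(x_0),
$$
a contradiction. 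Hence $u\leq w=C d^\alpha$ on $\Omega$.

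\emph{Conclusion.} The operator $\I$ is odd, $\I(-v)=-\I v$, because $\inf_\xi(-g(\xi))=-\sup_\xi g(\xi)$ and $\sup_\xi(-g(\xi))=-\inf_\xi g(\xi)$. Thus $-u\in C(\R^N)$ is a viscosity solution of $\I(-u)=-f$ in $\Omega$ with $-u=0$ on $\R^N\setminus\Omega$ and with the same $L^\infty$ norms, so the previous step applies verbatim and yields $-u\leq C d^\alpha$ on $\Omega$. Combining the two inequalities gives $|u(x)|\leq C d^\alpha(x)$ for every $x\in\Omega$, which is \eqref{globalestimate}.

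The one delicate point is the nonlocal bookkeeping in the second step: once $\varphi=w+c$ is inserted into the subsolution inequality, the tails of the one-dimensional integrals still involve $u$, not $w$, and to dominate them by the corresponding tails of $w$ one needs $u-w\le c$ \emph{globally} on $\R^N$, not merely $u-w\le 0$ on $\partial\Omega_\mu$. This is exactly why the contradiction must be set at a global maximum point of $u-w$. Everything else — the existence of $\mu$ with $d\in C^2(\Omega_\mu)$ for a $C^2$ domain, and the explicit choice of $C$ — is routine once Proposition~\ref{propdist} is available.
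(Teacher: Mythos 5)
Your proposal is correct and follows essentially the same route as the paper: use Proposition \ref{propdist} to make $Cd^\alpha$ a strict supersolution in a collar $\Omega_\mu$, choose $C$ large so that $u\le Cd^\alpha$ on $\Omega\setminus\Omega_\mu$ and outside $\Omega$, run the comparison by testing the viscosity subsolution inequality at a global maximum point of $u-Cd^\alpha$ (your $\varphi=w+c$ is the paper's test function $v$ up to the irrelevant additive constant), and then apply the argument to $-u$ for the lower bound. Your extra care with the global maximum and the nonlocal tails is exactly the point the paper's shorter proof relies on implicitly.
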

\begin{proof}
Using Proposition \ref{propdist} we can pick $\mu>0$ small enough and $C>0$ sufficiently large such that the function $v(x)=Cd^\alpha(x)\in C^2(\Omega_\mu)$ satisfies 
\begin{equation}\label{eq1probarr}
v(x)\geq u(x)\quad\forall x\in\Omega\backslash\Omega_\mu
\end{equation} 
and 
\begin{equation}\label{eq2probarr}
\I v(x)\leq -\left(\left\|f\right\|_{L^\infty(\Omega)}+1\right)\quad\forall x\in\Omega_\mu.
\end{equation} 
If $u(x)>v(x)$ for some $x\in\Omega_\mu$ then, by \eqref{eq1probarr} and the fact that $v\equiv u$ in $\R^N\backslash\Omega$,  there would exist $x_0\in\Omega_\mu$ such that  
$$
0<u(x_0)-v(x_0)=\max_{\R^N}u-v.
$$
Hence, using $v$ as test function at $x_0$, we get
$$
\I v(x_0)\geq f(x_0)
$$
which is in contradiction to \eqref{eq2probarr}. This shows that $u(x)\leq Cd^\alpha(x)$ in $\Omega$. \\
The inequality $$u(x)\geq- Cd^\alpha(x)\quad\text{in $\Omega$}$$ can be obtained arguing as  above with the function $-u$ which is in turn a solution of \eqref{eqDir2} with $f$ replaced by $-f$.
\end{proof}

The proof of the interior regularity can be easily modified to yield global regularity of solutions of \eqref{eqDir2} once the boundary estimates \eqref{globalestimate} are available.

\begin{proof}[Sketch of the proof of Theorem \ref{th3}]
We adopt the same notations used in the proof of Theorem \ref{th1}. Let 
$$
\varepsilon_0=\frac{1}{\sqrt{1+2\left\|f\right\|_{L^\infty(\Omega)}}}
$$
and consider the function $\phi:\R^N\times\R^N\mapsto\R$ defined by
$$
\phi(x,y)=u(x)-u(y)-M|x-y|^\alpha.
$$
The theorem is proved once one shows that  
$$
\sup_{(x,y)\in\R^N\times\R^N}\phi(x,y)\leq0
$$
for an appropriate choice of $\alpha$ and $M$.\\
By contradiction we assume the contrary, that is
\begin{equation}\label{eq1th3}
\sup_{(x,y)\in\R^N\times\R^N}\phi(x,y)>0.
\end{equation}
Now we observe that if $M$ is sufficiently large then there exists $(\bar x,\bar y)\in\Omega\times\Omega$, with $\bar x\neq\bar y$, such that 
\begin{equation}\label{eq2th3}
|\bar x-\bar y|<\varepsilon_0\quad\text{and}\quad\max_{(x,y)\in\R^N\times\R^N}\phi(x,y)=\phi(\bar x,\bar y).
\end{equation}
If \eqref{eq2th3} is true, then one can follows line by line the proof of the interior regularity, starting from \eqref{eq12} and with $L=0$, in order to obtain the desired contradiction.\\
To prove \eqref{eq2th3} we first observe that  
\begin{equation}\label{eq3th3}
\phi(x,y)\leq0 \quad \forall x,y\in\R^N\;\;\text{s.t.}\;\;|x-y|\geq\varepsilon_0
\end{equation}
provided $M\geq\frac{2\left\|u\right|_{L^\infty}(\R^N)}{\varepsilon_0^\alpha}$. If instead $x\not\in\Omega$ then, using \eqref{globalestimate}, it holds that for every $y\in\R^N$ 
$$
u(x)-u(y)=-u(y)\leq Cd^\alpha(y)\leq C|x-y|^\alpha.
$$
Hence
\begin{equation}\label{eq4th3}
\phi(x,y)\leq0 \quad \forall (x,y)\in\R^N\backslash\Omega\times\R^N
\end{equation}
provided $M\geq C$. In a similar way, using \eqref{globalestimate},  
\begin{equation}\label{eq5th3}
\phi(x,y)\leq0 \quad \forall (x,y)\in\Omega\times\R^N\backslash\Omega.
\end{equation}
From \eqref{eq3th3}-\eqref{eq4th3}-\eqref{eq5th3} we infer that \eqref{eq2th3} holds and this conclude the proof.
\end{proof}

\section{Application: Liouville property}

In \cite[Section 7]{FV} the following  unilateral Liouville property was proved: any viscosity supersolution $u$, bounded from below, of the equation
\begin{equation}\label{Leq1}
\lambda_1(D^2u)+\lambda_N(D^2u)=0\qquad\text{in $\mathbb R^N$}
\end{equation}
is constant. If $N=2$ such result reduces to well know case of  entire superharmonic functions, but interestingly this property is true even if $N\geq3$, while it is false for the Laplacian. Even more it is not true also for the nonlocal counterpart of \eqref{Leq1}, in the sense that the equation
\begin{equation}\label{Leq2}
\I u=\inf_{\xi\in\mathbb S^{N-1}}\I_\xi u+\sup_{\xi\in\mathbb S^{N-1}}\I_\xi u=0\qquad\text{in $\mathbb R^N$}
\end{equation}
admits nontrivial and bounded supersolutions in any dimension $N\geq2$. To show this we borrow some arguments from \cite[Sections 3 and 4]{BGT}.
Let 
\begin{equation}\label{funz}
u(x)=\frac{1}{{(1+|x|)}^\varepsilon}
\end{equation}
where $\varepsilon>0$. Using \cite[Theorem 3.4-Remark 3.5]{BGT} we know that for $x\neq0$ 
\begin{equation}\label{Leq3}
\begin{split}
\inf_{\xi\in\mathbb S^{N-1}}\I_\xi u(x)&=\I_{x^\perp}u(x)\\
\sup_{\xi\in\mathbb S^{N-1}}\I_\xi u(x)&=\I_{\hat{x}}u(x),
\end{split}
\end{equation}
where $\hat x=\frac{x}{|x|}$ and $x^\perp$ is any unit vector orthogonal to $x$. Moreover, if we consider the operator
$$
\I^+_2u(x)=\sup\left\{\I_{\xi_1}u(x)+\I_{\xi_2}u(x)\,:\;\xi_1,\xi_2\in\mathbb S^{N-1},\;\left\langle\xi_1,\xi_2 \right\rangle=0\right\}
$$
we obtain, again by \cite[Theorem 3.4]{BGT}, that
\begin{equation}\label{Leq4}
\I^+_2u(x)=\I_{x^\perp}u(x)+\I_{\hat{x}}u(x).
\end{equation}
Hence, in view of \eqref{Leq3}-\eqref{Leq4}, we have
\begin{equation}\label{Leq5}
\I u(x)=\I^+_2u(x).
\end{equation}
Now using \eqref{Leq5} and the computations of \cite[Proposition 4.3]{BGT}, for any $x\neq0$ it holds that
\begin{equation*}
\begin{split}
\I u(x)&\leq\frac{1}{{(1+|x|)}^{\varepsilon+2s}}\left(C_s\,\text{P.V.}\,\int\limits_{-\infty}^{+\infty}\frac{{|1+\tau|}^{-\varepsilon}-1}{|\tau|^{1+2s}}\,d\tau
+C_s\int\limits_{-\infty}^{+\infty}\frac{{|1+\tau^2|}^{-\frac\varepsilon2}-1}{|\tau|^{1+2s}}\,d\tau\right)\\
&=:\frac{1}{{(1+|x|)}^{\varepsilon+2s}}\, c(\varepsilon).
\end{split}
\end{equation*}
By \cite[Proposition 3.7]{BGT}, the quantity $c(\varepsilon)$ is negative provided $\varepsilon$ is sufficiently small (depending on $s$). Lastly, since  the function $u$ cannot be touched from below at $x=0$, we conclude that $u$ is in fact a bounded viscosity supersolution of \eqref{Leq2}, as we wanted to show. It is worth to point out that $\varepsilon(s)\to0$ as $s\to 1^-$ (see \cite[Remark 3.9]{BGT}), hence the function $u$ in \eqref{funz} converges to a constant function according to the nonexistence of nontrivial supersolutions for the local equation \eqref{Leq1}. 

\medskip

\noindent
Differently from the case of supersolutions, the Liouville property is restored for solutions of \eqref{Leq2}, that we state in the following theorem. 

\begin{theorem}\label{th2}
Let $s_0$ be defined in Theorem \ref{th1}. 
If $s>s_0$ and $u\in C(\mathbb R^N)$ is a bounded viscosity solution of \eqref{Leq2}, then $u$ is constant. 
\end{theorem}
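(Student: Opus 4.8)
The plan is the classical one: deduce the Liouville property from the interior estimate \eqref{HolderEstimate} by exploiting the scaling invariance of the equation $\I u=0$. The key observation is that, for $R>0$, the dilation $u_R(x):=u(Rx)$ is again a bounded viscosity solution of \eqref{eq3} with $f\equiv 0$ on $\Omega\equiv\R^N$, with $\|u_R\|_{L^\infty(\R^N)}=\|u\|_{L^\infty(\R^N)}$. Indeed, performing the change of variables $\sigma=R\tau$ in \eqref{eq1} gives $\I_\xi u_R(x)=R^{2s}\,\I_\xi u(Rx)$ for every $\xi\in\mathbb S^{N-1}$, hence $\I u_R(x)=R^{2s}\,\I u(Rx)=0$; and the homogeneity (in both the operator and the $C^2$ test functions) makes the notion of viscosity solution stable under such dilations.

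Granting this, I would apply Theorem \ref{th1} to $u_R$ with the fixed pair $\omega=B_1\subset\subset\omega'=B_2$. By the explicit dependence in \eqref{HolderEstimate}, the resulting constant depends only on $s$, on $\dist(B_1,\partial B_2)=1$, on $\|u_R\|_{L^\infty(\R^N)}=\|u\|_{L^\infty(\R^N)}$ and on $\|f\|_{L^\infty(B_2)}=0$; in particular it is a constant $C_0$ independent of $R$, and the H\"older exponent $\alpha=\alpha(s)$ is likewise independent of $R$. Thus the seminorm satisfies $[u_R]_{C^{0,\alpha}(B_1)}\le C_0$ for all $R>0$.

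To conclude, I would undo the scaling: since $u_R(x)-u_R(y)=u(Rx)-u(Ry)$, one has $[u_R]_{C^{0,\alpha}(B_1)}=R^{\alpha}\,[u]_{C^{0,\alpha}(B_R)}$, and therefore $[u]_{C^{0,\alpha}(B_R)}\le C_0 R^{-\alpha}$ for every $R>0$. Fixing any two points $x,y\in\R^N$ and letting $R\to+\infty$ (so that eventually $x,y\in B_R$), this forces $|u(x)-u(y)|\le C_0 R^{-\alpha}|x-y|^{\alpha}\to 0$, i.e. $u(x)=u(y)$. Hence $u$ is constant.

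The only point that requires care is the bookkeeping behind the first two paragraphs: one must check that the scaling identity $\I_\xi u_R(x)=R^{2s}\I_\xi u(Rx)$ passes to viscosity solutions and, above all, that the constant furnished by Theorem \ref{th1} is genuinely scale-free once the configuration $\omega\subset\subset\omega'$ has been normalized --- this is precisely what the explicit form of $C$ in \eqref{HolderEstimate} is for. Alternatively, one can bypass the rescaling by inspecting the proof of Theorem \ref{th1}: when $f\equiv 0$ one may take $\varepsilon_0=1$ and then choose $M$ independent of the size of $\omega$, so that letting $\omega=B_R$ and $R\to+\infty$ in the resulting inequality $u(z_0)-u(y)\le M|z_0-y|^\alpha$ already yields that $u$ is constant; I would nonetheless present the cleaner scaling version.
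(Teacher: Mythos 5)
Your argument is correct and coincides with the paper's own proof: rescale via $v(x)=u(Rx)$, use the homogeneity $\I v(x)=R^{2s}\I u(Rx)=0$, apply the interior estimate \eqref{HolderEstimate} on $\omega=B_1(0)\subset\subset\omega'=B_2(0)$ with a constant independent of $R$, and let $R\to\infty$ after undoing the scaling. No discrepancies to report.
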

 The proof of the above theorem can be obtained by following the same arguments of \cite[Corollary 6.1]{S}, where the Liouville property is proved for a large class of integro-differential operators with kernels comparable to those of the fractional Laplacian, the key ingredients being the homogeneity of the operators considered and local H\"older estimates. The restriction on $s$ in Theorem \ref{th2} is related to Theorem \ref{th1} and any improvement concerning the range of $s$ in the estimate \eqref{HolderEstimate} provides an analogous in Theorem \ref{th2}. For the convenience of the reader we give the proof of Theorem \ref{th2}.

\begin{proof}[Proof of Theorem \ref{th2}]
Take $R>0$ and let $v(x)=u(Rx)$. Then
$$
\I v(x)=R^{2s}\I u(Rx)=0\qquad\text{in $\R^N$}.
$$
Using Theorem \ref{th1} with $\omega=B_1(0)$, $\omega'=B_2(0)$ and the fact that $\left\|v\right\|_{L^\infty(\R^N)}=\left\|u\right\|_{L^\infty(\R^N)}$, we infer that for some $\alpha\in(0,1)$
$$
\frac{|v(x)-v(y)|}{{|x-y|}^\alpha}\leq C\qquad\text{for any $x,y\in B_1(0)$,}
$$
where $C$ is a positive constant depending only $s$ and $\left\|u\right\|_{L^\infty(\R^N)}$. Rescaling back from $v$ to $u$ we then obtain
$$
\frac{|u(x)-u(y)|}{{|x-y|}^\alpha}\leq \frac{C}{{R}^\alpha}\qquad\text{for any $x,y\in B_R(0)$.}
$$
Since the right hand-side of the previous inequality tends to zero as $R\to\infty$, we conclude that $u(x)=u(y)$ for any $x,y\in\R^N$.
\end{proof}

\section*{Acknowledgements} 
\noindent
Y.S. is partially supported by NSF DMS Grant $2154219$, \lq\lq Regularity {\sl vs} singularity formation in elliptic and parabolic equations\rq\rq. I.B and G.G. are partially supported by  INdAM-GNAMPA. The authors wish to thank the Sapienza University of Rome for funding the third author's visit during the period May 1 to May 30, 2023, during which this work was started. Y.S. thanks the Department of Mathematics Guido Castelnuovo for its warm  hospitality.

\bibliographystyle{alpha}

\end{document}